
\documentclass[preprint,12pt,3p]{elsarticle}



\usepackage{graphicx}
\usepackage{subfig}

\usepackage{amsmath,amssymb}

\usepackage{amsthm}
\newtheorem{theorem}{Theorem}[section]
\newtheorem{definition}[theorem]{Definition}
\newtheorem{lemma}[theorem]{Lemma}
\newtheorem{remark}[theorem]{Remark}





\journal{TBA}

\begin{document}

\begin{frontmatter}

\title{Constructing a variational quasi-reversibility method for a Cauchy problem for elliptic equations}

\author{Vo Anh Khoa\corref{cor1}\fnref{label1}}
\address[label1]{Department of Mathematics and Statistics, University of North Carolina at Charlotte, Charlotte, North Carolina 28223, USA}

\cortext[cor1]{Corresponding author.}

\ead{vakhoa.hcmus@gmail.com}

\author{Pham Truong Hoang Nhan\fnref{label2}}
\address[label2]{Department of Mathematics and Computer Science, VNUHCM-University of Science, 227 Nguyen Van Cu Str., Dist. 5, Ho Chi Minh City, Vietnam}
\ead{pthnhan1908@gmail.com }

\begin{abstract}
In the recent developments of regularization theory for inverse and ill-posed problems, a variational quasi-reversibility (QR) method has been designed to solve a class of time-reversed quasi-linear parabolic problems. Known as a PDE-based approach, this method relies on adding a suitable perturbing operator to the original problem and consequently, on gaining the corresponding fine stabilized operator, which leads us to a forward-like problem.  In this work, we establish new conditional estimates for such operators to solve a prototypical Cauchy problem for elliptic equations. This problem is based on the stationary case of the inverse heat conduction problem, where one wants to identify the heat distribution in a certain medium, given the partial boundary data. Using the new QR  method, we obtain a second-order initial value problem for a wave-type equation, whose weak solvability can be deduced using a priori estimates and compactness arguments. Weighted by a Carleman-like function, a new type of energy estimates is explored in a variational setting when we investigate the H\"older convergence rate of the proposed scheme. Besides, a linearized version of this scheme is analyzed. Numerical examples are provided to corroborate our theoretical analysis. 
\end{abstract}

\begin{keyword}
Inverse and ill-posed problems\sep quasi-reversibility method \sep convergence rates \sep energy estimates\sep Carleman weight
\MSC 65J05 \sep 65J20 \sep 35K92
\end{keyword}

\end{frontmatter}


\section{Introduction}
\label{sec1}
\subsection{Statement of the inverse problem}
Solving boundary value determination problems is one of the classical research topics in the field of inverse and ill-posed problems; cf. the survey \cite{Kabanikhin2008} for the background of some classical inverse problems for partial differential equations (PDEs). Physically, this type of problems stems from the stationary case of the inverse heat conduction problem. In this regard, we seek the unknown temperature distribution in a certain medium when informative data are given on some parts of the boundary. In terms of PDEs, this finding is governed by the so-called Cauchy problem for elliptic equations, which is highly ill-posed in the sense of Hadamard. In this work, we look for the real-valued function $u\left(x,y\right)$ in a unit rectangle $[0,1]\times[0,1]$ from the data $u_0\in H^1(0,1)$ at $x=0$, when $u$ obeys the following Laplace system:
\begin{align}\label{original1}
\begin{cases}
u_{xx}+u_{yy}=0 & \text{in }\left(0,1\right)\times\left(0,1\right),\\
u\left(x,0\right)=u\left(x,1\right)=0 & \text{for }x\in [0,1],\\
u\left(0,y\right)=u_{0}\left(y\right),u_{x}\left(0,y\right)=0 & \text{for }y\in [0,1].
\end{cases}
\end{align}
Here, the zero Neumann boundary condition at $x=0$ means that there is no heat entering or escaping at this boundary. Meanwhile, we assume the zero Dirichlet conditions at $y=0,1$. Since in real-world applications the data $u_0$ can only be measured, we suppose to have the measured data $u_0^{\varepsilon}\in H^1(0,1)$ associated with some noise level $\varepsilon\in(0,1)$, which satisfies 
\begin{align}\label{measure}
\left\Vert u_{0}^{\varepsilon}-u_{0}\right\Vert _{H^{1}\left(0,1\right)}\le\varepsilon.
\end{align}

Even though the model (\ref{original1}) is the simplest case of the Cauchy problem for elliptic equations, its extensions to more general scenarios were already mentioned in the previous works for regularization of Laplace equations. In fact, some particular generalizations have the analytical capability of reducing to the system (\ref{original1}); cf. subsection \ref{subsec:5.3} for the revisited. The existing literature on the Cauchy problems for elliptic equations is vast from theoretical and numerical viewpoints. In principle, such problems are solved for decades by distinctive approaches and thus, it is pertinent to address some fundamental researches. For instance, we would like to mention here the spectral regularization method for the sustainable development in the field of inverse and ill-posed problems; cf. e.g. \cite{Leitao2000,Qian2008,Tuan2010,Elden2009}. This method and its variants are based on stabilizing the unbounded kernels appearing in 
the explicit representation of solution. The Tikhonov-type regularization with convex and strictly convex functionals in \cite{Hao2000,Takeuchi2008,Klibanov2015,Reinhardt1999} has also received much attention in this field, throughout the minimization procedure. This method is essentially related to the variational logarithmic convexity in \cite{Falk1986,Elden2005}, where it ``convexifies" the energy functional logarithmically using a Carleman-like weight. Since this notion further concerns massive Carleman estimates, details of the convexification can be referred to the survey \cite{Klibanov2013} and references cited therein. Some other approaches should be addressed here include the moment method in \cite{Wei2003} and the integral equation approach in \cite{Sun2016,Chapko2012}.

\subsection{Goals and novelty}
This paper is aimed at enhancing our
understanding of the application of the novel quasi-reversibility (QR) method which has been designed so far in our pioneering work by \cite{Nguyen2019} for regularization of time-reversed parabolic systems. Based upon the original QR method by Latt\`es
and Lions in the textbook \cite{Lattees1967}, this is a PDE-based approach that consists in the establishment of two operators along with their conditional estimates. We call those the \emph{perturbing} and \emph{stabilized operators}. In this setting, the perturbing operator is added to the original PDE for the sake of ``absorbing" the unbounded operator (i.e. the aimed operator to be stabilized in the PDE). Then we obtain a stable approximate problem with the corresponding stabilized operator, which is usually referred to as the \emph{regularized problem}. The key idea of our new QR method lies in the fact that this addition turns the inverse problem into the forward-like problem by just acquiring the boundedness of the leading coefficient of the unbounded operator. Thereby, it reveals the perfect connection between the inverse and forward problems right in the governing equations. It is obvious that there are certainly hundreds of numerical methods to solve the forward problem and thus, our inversion method would be implemented easily.

Due to the aforementioned major feature, it is a vast potential that our QR method can be extended to many different concerns in the field of inverse problems. Since this is the inception stage of this approach, we deliberately apply it to design a regularized problem for the severely ill-posed problem (\ref{original1}) using the measurement $u_0^{\varepsilon}$ observed in (\ref{measure}). When doing so, we explore that the regularized problem for (\ref{original1}) is essentially expressed as an acoustic wave equation, which singles out one of the prominent aspects of the method we are studying. It is worth mentioning that the conditional estimates for our perturbing and stabilized operators are crucial for convergence analysis of the QR scheme, and they vary for different types of PDEs. Henceforth, our first novelty here is devoted to deriving these new conditional estimates for the Cauchy problem of elliptic equations. Especially, such estimates are established with relevance to the practical and computational aspects.

As in \cite{Nguyen2019}, to prove the convergence, we rely on a Carleman weight function to deduce a noise-scaled difference problem. In the standard variational setting, this weight plays a vital role in getting rid of large quantities involved in the difference system, which yields our second novelty (compared to many other works for inverse boundary value problems). Then our proof is centered around a weighted energy method that is coupled with careful conditional estimates for the established operators. The obtained error estimate is of the H\"older rate under some certain assumptions on the true solution. In some sense, the true solution can be assumed as a unique strong solution in the standard elliptic boundary value problem. We also analyze the convergence of a linearized version of the proposed QR scheme.

\subsection{Outline of this article}
The rest of the paper is organized as follows. Section \ref{sec:2} is devoted to the concrete setting of the QR framework we would like to study in this work. In this part, we define a regularized system for our Laplace problem (\ref{original1}). In section \ref{sec:3}, we prove the weak solvability of the regularized system using a priori estimates with compactness arguments. Convergence analysis is then conducted in section \ref{sec:4}, where we exploit a Carleman-like weight to prove the H\"older rate of convergence of the scheme. In section \ref{sec:5}, we discuss a choice of the operators we are constructing in the QR framework. Furthermore, we design a linearized version of the QR scheme and show its convergence by making a condition between the discretization in $x$ and the noise level $\varepsilon$. We also recall some generalizations of (\ref{original1}) and address some concerns about large noise levels, where our method can be modified to use. We provide two numerical examples in section \ref{sec:6} to see how our method works. Essentially, it performs very well when $\varepsilon = 10\%,1\%$; these types of noise are relevant in physical applications. We close the paper by some conclusions in \ref{sec:7} for some works in the near future.

\section{A modified quasi-reversibility framework}\label{sec:2}
In the sequel, $\left\langle \cdot,\cdot\right\rangle$ indicates either the scalar product in $L^2(0,1)$ or the dual pairing of a continuous linear functional  and an element of a function space. We mean $\left\Vert \cdot\right\Vert $ the norm in the Hilbert space $L^2(0,1)$. Different inner products and norms should be written as 
$\left\langle \cdot,\cdot\right\rangle_{X}$ and $\left\Vert \cdot\right\Vert_{X} $, respectively, where $X$ is a certain Banach space. Throughout this paper, $C$ denotes a non-negative constant which is not dependent on the noise level $\varepsilon$. This constant may vary from line to line, but we usually indicate its dependences if necessary.

From now on, we introduce the so-called regularization parameter $\beta\left(\varepsilon\right)\in (0,1)$ such that $\beta\to 0$ as $\varepsilon\to0$. In doing so, we consider an auxiliary function $\gamma:(0,1)\to \mathbb{R}$ such that for $\beta\in (0,1)$ there holds
\[
\gamma(\beta)\ge 1,\quad \lim_{\beta\to 0}\gamma(\beta)=\infty.
\]

\begin{definition}[perturbing operator]\label{def1}
	The linear mapping $\mathbf{Q}_{\varepsilon}^{\beta}:L^2(0,1)\to L^2(0,1)$ is said to be a perturbing operator if there exist a function space $\mathbb{W}\subset L^2(0,1)$ and a noise-independent constant $C_0 > 0$ such that
	\begin{align}\label{QQ}
	\left\Vert \mathbf{Q}_{\varepsilon}^{\beta}u\right\Vert \le C_{0}\left\Vert u\right\Vert _{\mathbb{W}}/\gamma(\beta)\quad\text{for any }u\in\mathbb{W}.
	\end{align}
\end{definition}

\begin{definition}[stabilized operator]\label{def2}
	The linear mapping $\mathbf{P}_{\varepsilon}^{\beta}: H^1(0,1)\to L^2(0,1)$ is said to be a stabilized operator if there exists a noise-independent constant $C_1 > 0$ such that
	\begin{align}\label{PP}
	\left\Vert \mathbf{P}_{\varepsilon}^{\beta}u\right\Vert \le C_{1}\log\left(\gamma\left(\beta\right)\right)\left\Vert u\right\Vert_{H^1(0,1)} \quad\text{for any }u\in H^{1}\left(0,1\right).
	\end{align}
\end{definition}

For each noise level, our quasi-reversibility scheme is constructed in the following manner. We perturb the PDE in \eqref{original1} by a linear mapping $\mathbf{Q}_{\varepsilon}^{\beta}$ and take $\mathbf{P}_{\varepsilon}^{\beta} = \mathbf{Q}_{\varepsilon}^{\beta} +2 \partial^{2}/\partial y^2$. Thus, we have
\begin{align}\label{IVP1}
	\frac{\partial^{2}}{\partial x^{2}}
	u_{\beta}^{\varepsilon}
	- \frac{\partial^{2}}{\partial y^{2}} u_{\beta}^{\varepsilon} + \mathbf{P}_{\varepsilon}^{\beta}u_{\beta}^{\varepsilon} = 0\quad\text{in } (0,1)\times(0,1),
\end{align}
associated with the Dirichlet boundary condition and the initial conditions:
\begin{align}\label{IVP2}
\begin{cases}
u_{\beta}^{\varepsilon}\left(x,0\right)=u_{\beta}^{\varepsilon}\left(x,1\right)=0 & \text{for }x\in[0,1],\\
u_{\beta}^{\varepsilon}\left(0,y\right)=u_{0}^{\varepsilon}\left(y\right),\partial_{x}u_{\beta}^{\varepsilon}\left(0,y\right)=0 & \text{for }y\in[0,1].
\end{cases}
\end{align}
Essentially, (\ref{IVP1})--(\ref{IVP2}) form our regularized problem.

\begin{remark}\label{rem:2.3}
	The variable $x$ in (\ref{IVP1})--(\ref{IVP2}) can be understood as a parametric time. Therefore, system (\ref{IVP1})--(\ref{IVP2}) resembles a Dirichlet--Cauchy  wave equation in a unit rectangle controlled by the noise level $\varepsilon$. Since energy of the stabilized term $\mathbf{P}_{\varepsilon}^{\beta}u_{\beta}^{\varepsilon}$ is large with respect to the noise argument, a careful adaption of fundamental energy techniques that we usually enjoy in forward problems for wave equations is really needed.
	
	According to the standard result for the Dirichlet eigenvalue problem, there exists an orthonormal basis of $L^2(0,1)$, denoted by $\left\{\phi_{j}\right\}_{j\in\mathbb{N}}$, such that $\phi_{j}\in H^1_0(0,1)\cap C^{\infty}[0,1]$ and $-\partial^2_{y^2} \phi_{j}(y) = \mu_{j}\phi_{j}(y)$ for $y\in (0,1)$. The Dirichlet eigenvalues $\left\{\mu_{j}\right\}_{j\in\mathbb{N}}$ form an infinite sequence which goes to infinity in the following sense
	\[
	0\le \mu_{0} < \mu_{1} < \mu_{2} < \ldots, \text{ and }\lim_{j\to\infty}\mu_{j} = \infty.
	\]
\end{remark}

\begin{remark}
	The conditional estimate (\ref{PP}) is weaker than the one we have proposed in \cite{Nguyen2019} because of the following three reasons. First, the regularized problem in this work is expressed as a hyperbolic equation; compared to the parabolic one in \cite{Nguyen2019}, which turns out that we eventually need the information in $H^1(0,1)$ during the energy estimates. Second, we mimic the Fourier truncation method to design a computable stabilized operator. It is due to the fact that the stabilized operators we have introduced in \cite{Nguyen2019} are formed by a Fourier series with modified kernels, and it is hard to approximate an infinite series without truncating high frequencies in a suitable manner. Finally, the space $\mathbb{W}$ for the true solution we need for our convergence analysis is usually a Gevrey-type space, which is not natural to be assumed according to the forward problem for linear elliptic equations. This is completely different from the Gevrey assumption on the true solution we have made in \cite{Nguyen2019} because it is well-known that for some analytical parabolic equations they possess themselves a local-in-time weak solution in Gevrey spaces; cf. \cite{Ferrari1998}. In the present work, we rely on a special property of solution using the Fourier representation to show a $C^1$ regularity bound for a particular Gevrey criterion we need. This bound is a perfect match for the estimate (\ref{PP}). The second and third reasons can be manifested in subsection \ref{subsec:5.1}.
\end{remark}

\section{Weak solvability of the regularized system (\ref{IVP1})--(\ref{IVP2})}\label{sec:3}
To show the weak solvability of the regularized problem, we introduce its weak formulation in the following manner.

\begin{definition}\label{weaksol}
	For each $\varepsilon>0$, a function $u_{\beta}^{\varepsilon}:[0,1]\to H_0^1(0,1)$ is said to be a weak solution to system (\ref{IVP1})--(\ref{IVP2}) if
	\begin{itemize}
		\item $u_{\beta}^{\varepsilon}\in C([0,1];H_0^1(0,1))$, $\partial_{x}u_{\beta}^{\varepsilon}\in C([0,1];L^2(0,1))$, $\partial_{x^2}^2 u_{\beta}^{\varepsilon}\in L^2(0,1;H^{-1}(0,1))$;
		
		\item For every test function $\psi\in H_0^1(0,1)$, it holds that
		\begin{align}\label{IVP3}
		\left\langle \frac{\partial^{2}}{\partial x^{2}}u_{\beta}^{\varepsilon},\psi\right\rangle
		+ \left\langle \partial_{y}u_{\beta}^{\varepsilon},\partial_{y}\psi\right\rangle
		 +\left\langle \mathbf{P}_{\varepsilon}^{\beta}u_{\beta}^{\varepsilon},\psi\right\rangle =0\quad \text{for a.e. in }(0,1);
		\end{align}
		
		\item $u_{\beta}^{\varepsilon}(0)=u_{0}^{\varepsilon}$ and $\partial_{x}u_{\beta}^{\varepsilon}(0)=0$.
	\end{itemize}
	
\end{definition}

Consider the $n$-dimensional subspace $\mathbb{S}_{n}$ of $H_0^1(0,1)$ generated by $\phi_0,\phi_1,\ldots,\phi_{n}$. Using the Galerkin projection, we construct approximate solutions to (\ref{IVP1})--(\ref{IVP2}) in the form
\begin{align}\label{Galerkin}
u_{n}^{\varepsilon}(x,y)=\sum_{j=0}^{n}U_{jn}^{\varepsilon}(x)\phi_{j}(y).
\end{align}
Note that for ease of presentation, we neglect the presence of $\beta$ in this section. The solution $u_{n}^{\varepsilon}$ is the solution of the following approximate problem:
\begin{align}\label{IVP4}
\left\langle \frac{\partial^{2}}{\partial x^{2}}u_{n}^{\varepsilon},\psi\right\rangle
+ \left\langle \partial_{y}u_{n}^{\varepsilon},\partial_{y}\psi\right\rangle
+\left\langle \mathbf{P}_{\varepsilon}^{\beta}u_{n}^{\varepsilon},\psi\right\rangle =0\quad \text{for }\psi\in\mathbb{S}_{n}\;\text{and a.e. in }(0,1),
\end{align}
with $\partial_{x}u_{n}^{\varepsilon}\left(0\right)=0$ and
\begin{align}\label{IVP5}
u_{n}^{\varepsilon}\left(0\right)=\sum_{j=0}^{n}\left(U_{0}^{\varepsilon}\right)_{jn}\phi_{j}\to u_{0}^{\varepsilon}\quad\text{strongly in }H^{1}\left(0,1\right)\;\text{as }n\to\infty.
\end{align}
By the choice $\psi = \phi_{j}$, the functions  $U_{jn}^{\varepsilon}$ in (\ref{Galerkin}) are to be found as  solutions to  the Cauchy problem for the system of $n$ ordinary differential equations:
\begin{align}
& \frac{d^{2}}{dx^{2}}U_{jn}^{\varepsilon}+\mu_{j}U_{jn}^{\varepsilon}+\sum_{i=0}^{n}U_{in}^{\varepsilon}\left\langle \mathbf{P}_{\varepsilon}^{\beta}\phi_{i},\phi_{j}\right\rangle =0, \label{aa}\\
& U_{jn}^{\varepsilon}\left(0\right)=\left(U_{0}^{\varepsilon}\right)_{jn} ,\frac{d}{dx}U_{jn}^{\varepsilon}\left(0\right)=0.\label{bb}
\end{align}

\begin{lemma}\label{lem:C1}
	For any fixed $n\in\mathbb{N}$ and for each $\varepsilon>0$, system (\ref{aa})--(\ref{bb}) has a unique solution $U_{jn}^{\varepsilon}\in C^1([0,1])$.
\end{lemma}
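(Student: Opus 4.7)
The plan is to reformulate the second-order linear system \eqref{aa}--\eqref{bb} as a first-order linear ODE system in a finite-dimensional space with constant coefficients, and then invoke classical existence and uniqueness theory. First, I would introduce the auxiliary unknown vector
\[
\mathbf{W}_{n}^{\varepsilon}(x)=\bigl(U_{0n}^{\varepsilon}(x),\ldots,U_{nn}^{\varepsilon}(x),\tfrac{d}{dx}U_{0n}^{\varepsilon}(x),\ldots,\tfrac{d}{dx}U_{nn}^{\varepsilon}(x)\bigr)^{\top}\in\mathbb{R}^{2(n+1)},
\]
so that \eqref{aa} is equivalent to a single vector equation $\frac{d}{dx}\mathbf{W}_{n}^{\varepsilon}=A_{n}^{\varepsilon,\beta}\mathbf{W}_{n}^{\varepsilon}$, where $A_{n}^{\varepsilon,\beta}$ is the $2(n+1)\times 2(n+1)$ block matrix assembled from the Dirichlet eigenvalues $\mu_{j}$ (for $j=0,\ldots,n$) and from the coupling entries $M_{ij}:=\langle\mathbf{P}_{\varepsilon}^{\beta}\phi_{i},\phi_{j}\rangle$. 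The initial datum $\mathbf{W}_{n}^{\varepsilon}(0)$ is prescribed by \eqref{bb}.

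Second, I would check that $A_{n}^{\varepsilon,\beta}$ is a well-defined constant matrix. By Remark \ref{rem:2.3}, each $\phi_{i}\in H_{0}^{1}(0,1)\cap C^{\infty}[0,1]$, so the action of the stabilized operator $\mathbf{P}_{\varepsilon}^{\beta}$ on $\phi_{i}$ is legitimate and, by the conditional estimate \eqref{PP} together with Cauchy--Schwarz,
\[
|M_{ij}|\le\|\mathbf{P}_{\varepsilon}^{\beta}\phi_{i}\|\,\|\phi_{j}\|\le C_{1}\log(\gamma(\beta))\,\|\phi_{i}\|_{H^{1}(0,1)},
\]
which is finite and independent of $x$. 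Hence every entry of $A_{n}^{\varepsilon,\beta}$ is a finite real number for each fixed $n$ and $\varepsilon$.

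Third, since $A_{n}^{\varepsilon,\beta}$ is a constant (hence globally Lipschitz) matrix, the Picard--Lindel\"of theorem applied to the Cauchy problem on the compact interval $[0,1]$ yields a unique global solution $\mathbf{W}_{n}^{\varepsilon}\in C^{1}([0,1];\mathbb{R}^{2(n+1)})$; equivalently, one may simply write the closed form $\mathbf{W}_{n}^{\varepsilon}(x)=\exp(xA_{n}^{\varepsilon,\beta})\mathbf{W}_{n}^{\varepsilon}(0)$, which is in fact real-analytic in $x$. Reading off the first $n+1$ components then yields $U_{jn}^{\varepsilon}\in C^{\infty}([0,1])\subset C^{1}([0,1])$ for $j=0,\ldots,n$, and uniqueness at the vector level transfers to uniqueness of each $U_{jn}^{\varepsilon}$.

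There is no serious obstacle here; the only point deserving a sentence is to confirm that the discrete coupling matrix $(M_{ij})$ is well-defined, which is ensured by the smoothness of the Dirichlet eigenfunctions and the bound \eqref{PP}. In particular, the $\varepsilon$- and $\beta$-dependent factor $\log(\gamma(\beta))$ enters only through the size of $A_{n}^{\varepsilon,\beta}$, not through the existence theory, so the qualitative statement of the lemma is uniform in these parameters.
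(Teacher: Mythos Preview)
Your argument is correct. Both you and the paper begin by rewriting \eqref{aa}--\eqref{bb} as a first-order system in $\mathbb{R}^{2(n+1)}$; the difference lies in how existence and uniqueness are then obtained. You observe that the resulting system is linear with a \emph{constant} coefficient matrix $A_{n}^{\varepsilon,\beta}$, so the solution is simply the matrix exponential $\mathbf{W}_{n}^{\varepsilon}(x)=\exp(xA_{n}^{\varepsilon,\beta})\mathbf{W}_{n}^{\varepsilon}(0)$, which settles the lemma in one stroke. The paper instead passes to the equivalent integral equation and shows, via an explicit inductive estimate using \eqref{PP}, that some iterate $H^{n_0}$ of the associated integral operator is a contraction on $C([0,1];\mathbb{R}^{2(n+1)})$, then invokes the Banach fixed-point theorem. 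Your route is shorter and exploits the specific structure (linearity, constant coefficients) fully; the paper's route is in effect a hands-on proof of Picard--Lindel\"of and would generalize more readily to $x$-dependent or nonlinear perturbations, at the cost of additional bookkeeping. For the lemma as stated, your argument is the cleaner of the two.
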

\begin{proof}
	Let $Z_{jn}^{\varepsilon}=\frac{d}{dx}U_{jn}^{\varepsilon}$. It follows from system (\ref{aa})--(\ref{bb}) that
	\begin{align*}
	& \frac{d}{dx}\begin{bmatrix}U_{jn}^{\varepsilon}\\
	Z_{jn}^{\varepsilon}
	\end{bmatrix}=\begin{bmatrix}0 & 1\\
	-\mu_{j} & 0
	\end{bmatrix}\begin{bmatrix}U_{jn}^{\varepsilon}\\
	Z_{jn}^{\varepsilon}
	\end{bmatrix}+\begin{bmatrix}0\\
	-\sum_{i=0}^{n}U_{in}^{\varepsilon}\left\langle \mathbf{P}_{\varepsilon}^{\beta}\phi_{i},\phi_{j}\right\rangle 
	\end{bmatrix},\\
	& \begin{bmatrix}U_{jn}^{\varepsilon}\left(0\right)\\
	Z_{jn}^{\varepsilon}\left(0\right)
	\end{bmatrix}=\begin{bmatrix}\left(U_{0}^{\varepsilon}\right)_{jn}\\
	0
	\end{bmatrix}.
	\end{align*}
	Consider $z_{jn}^{\varepsilon}=\left[U_{jn}^{\varepsilon},Z_{jn}^{\varepsilon}\right]^{\text{T}}$. We thus obtain the following integral equation:
	\begin{align}\label{ccc}
		z_{jn}^{\varepsilon}\left(x\right)=z_{jn}^{\varepsilon}\left(0\right)+A_j\int_{0}^{x}z_{jn}^{\varepsilon}\left(s\right)ds+\int_{0}^{x}F_{j}\left(z^{\varepsilon}\right)\left(s\right)ds,
	\end{align}
	where
	\[
	A_{j}=\begin{bmatrix}0 & 1\\
	-\mu_{j} & 0
	\end{bmatrix},\quad F_{j}\left(z^{\varepsilon}\right)=\begin{bmatrix}0\\
	-\sum_{i=0}^{n}U_{in}^{\varepsilon}\left\langle \mathbf{P}_{\varepsilon}^{\beta}\phi_{i},\phi_{j}\right\rangle 
	\end{bmatrix}.
	\]
	Here, we mean $z^{\varepsilon}=\left[z_{0n}^{\varepsilon},z_{1n}^{\varepsilon}\ldots,z_{nn}^{\varepsilon}\right]\in \mathbb{R}^{2(n+1)}$ and for simplicity, we, from now on, neglect the presence of $n$ in $U_{jn}^{\varepsilon}$ and $z_{jn}^{\varepsilon}$ in this proof. The integral equation (\ref{ccc}) can be rewritten as $z^{\varepsilon}(x) = H[z^{\varepsilon}](x)$, where the same notation as $z^{\varepsilon}$ is applied to $H$ with $H_{j}$ being the right-hand side of (\ref{ccc}).
	
	Define the norm in $Y=C([0,1];\mathbb{R}^{2(n+1)})$ as follows:
	\[
	\left\Vert c\right\Vert _{Y}:=\sup_{x\in\left[0,1\right]}\sum_{j=0}^{n}\left|c_{j}\left(x\right)\right| \quad\text{with }c=\left[c_{j}\right] \in \mathbb{R}^{2(n+1)}.
	\]
	We then want to prove that there exists $n_0\in \mathbb{N}^{*}$ such that the operator $H^{n_0}:=H[H^{n_0-1}]:Y\to Y$ is a contraction mapping. In other words, we find $K\in [0,1)$ such that
	\[
	\left\Vert H^{n_{0}}\left[z_1^{\varepsilon}\right]-H^{n_{0}}\left[z_2^{\varepsilon}\right]\right\Vert _{Y}\le K\left\Vert z_1^{\varepsilon}-z_2^{\varepsilon}\right\Vert _{Y}\quad\text{for any }z_1^{\varepsilon},z_2^{\varepsilon}\in Y.
	\]
	This can be done by induction. Indeed, observe that
	\begin{align*}
		& \left|H_{j}\left[z_1^{\varepsilon}\right]\left(x\right)-H_{j}\left[z_2^{\varepsilon}\right]\left(x\right)\right|\\
		& \le\int_{0}^{x}\left(\sqrt{\mu_{j}^2 + 1}\left|z_{1j}^{\varepsilon}\left(s\right)-z_{2j}^{\varepsilon}\left(s\right)\right|+C_{1}\log\left(\gamma\left(\beta\right)\right)\sum_{i=0}^{n}\left|U_{1i}^{\varepsilon}\left(s\right)-U_{2i}^{\varepsilon}\left(s\right)\right|\right)ds\\
		& \le\left(\frac{\sqrt{\mu_{j}^2+1}}{n+1}+CC_{1}\log\left(\gamma\left(\beta\right)\right)\right)x\left\Vert z_1^{\varepsilon}-z_2^{\varepsilon}\right\Vert _{Y},
	\end{align*}
	aided by the conditional estimate (\ref{PP}). Here, we indicate $C=C\left(\left\Vert \phi_{n}\right\Vert_{H_0^1(0,1)}\right)>0$. Therefore, it is immediate to prove that
	\[
	\left|H_{j}^{n}\left[z_1^{\varepsilon}\right]\left(x\right)-H_{j}^{n}\left[z_2^{\varepsilon}\right]\left(x\right)\right|\le\left[\frac{\sqrt{\mu_{j}^2+1}}{n+1}+CC_{1}\log\left(\gamma\left(\beta\right)\right)\right]^{n}\frac{x^{n}}{n!}\left\Vert z_1^{\varepsilon}-z_2^{\varepsilon}\right\Vert _{Y},
	\]
	which leads to the fact that
	\[
	\left\Vert H^{n}\left[z_1^{\varepsilon}\right]-H^{n}\left[z_2^{\varepsilon}\right]\right\Vert _{Y}\le\frac{\left\Vert z_1^{\varepsilon}-z_2^{\varepsilon}\right\Vert _{Y}}{n!}\sum_{j=0}^{n}\left[\frac{\sqrt{\mu_{j}^2+1}}{n+1}+CC_{1}\log\left(\gamma\left(\beta\right)\right)\right]^{n}.
	\]
	In view of the fact that
	\[
	\lim_{n\to\infty}\frac{1}{n!}\sum_{j=0}^{n}\left[\frac{\sqrt{\mu_{j}^2+1}}{n+1}+CC_{1}\log\left(\gamma\left(\beta\right)\right)\right]^{n}=0,
	\]
	we can find a sufficiently large $n_0$ such that
	\[
	\frac{1}{n_0!}\sum_{j=0}^{n_0}\left[\frac{\sqrt{\mu_{j}^2+1}}{n_0+1}+CC_{1}\log\left(\gamma\left(\beta\right)\right)\right]^{n_0}<1.
	\]
	This indicates the existence of $K\in [0,1)$ and that $H^{n_0}$ is a contraction mapping from $Y$ onto itself. By the Banach fixed-point argument, there exists a unique solution $z^{\varepsilon}\in Y$ such that $H^{n_0}[z^{\varepsilon}]=z^{\varepsilon}$. Since $H^{n_0}[H[z^{\varepsilon}]]=H[H^{n_0}[z^{\varepsilon}]]=H[z^{\varepsilon}]$, then the integral equation $H[z^{\varepsilon}]=z^{\varepsilon}$ admits a unique solution in $Y$. Hence, we complete the proof of the lemma.
\end{proof}

Now we can state the existence result for (\ref{IVP1})--(\ref{IVP2}) in the following theorem.

\begin{theorem}\label{thm:3.3}
	Assume (\ref{measure}) holds. For each $\varepsilon>0$, the regularized system (\ref{IVP1})--(\ref{IVP2}) admits a weak solution $u_{\beta}^{\varepsilon}$ in the sense of Definition \ref{weaksol}. Moreover, it holds that $u_{\beta}^{\varepsilon}\in C([0,1];H_0^1(0,1))$ and $\partial_{x}u_{\beta}^{\varepsilon}\in C([0,1];L^2(0,1))$.
\end{theorem}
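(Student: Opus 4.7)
The plan is a standard Galerkin closure on top of Lemma~\ref{lem:C1}: I would derive $n$-uniform a priori bounds for $u_n^\varepsilon$, extract a weakly convergent subsequence, pass to the limit in \eqref{IVP4}, and then invoke a wave-type regularity lemma to upgrade the limit into the function class required by Definition~\ref{weaksol}. Because $\beta$ is fixed throughout this existence proof, $\log(\gamma(\beta))$ is to be treated as a (possibly large) constant. Testing \eqref{IVP4} with $\psi = \partial_x u_n^\varepsilon(x) \in \mathbb{S}_n$ produces the wave-type energy identity
\[
\frac{1}{2}\frac{d}{dx}\!\left(\|\partial_x u_n^\varepsilon\|^2 + \|\partial_y u_n^\varepsilon\|^2\right) = -\langle \mathbf{P}_\varepsilon^\beta u_n^\varepsilon,\, \partial_x u_n^\varepsilon\rangle.
\]
Applying \eqref{PP}, Cauchy--Schwarz, and the Poincaré inequality (valid since $u_n^\varepsilon(x,\cdot)\in H_0^1(0,1)$), the right-hand side is dominated by a constant times $\|\partial_y u_n^\varepsilon\|^2 + \|\partial_x u_n^\varepsilon\|^2$. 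Together with $\partial_x u_n^\varepsilon(0)=0$ and the uniform bound of $\|u_n^\varepsilon(0)\|_{H^1}$ that \eqref{IVP5} provides, Gronwall's inequality delivers
\[
\sup_{x\in[0,1]}\!\left(\|u_n^\varepsilon(x)\|_{H^1(0,1)}^2 + \|\partial_x u_n^\varepsilon(x)\|^2\right) \le M,
\]
with $M=M(\varepsilon,\beta)$ independent of $n$.

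Rewriting the Galerkin equation as $\partial_{xx}^2 u_n^\varepsilon = \Pi_n\!\left(\partial_{yy}^2 u_n^\varepsilon - \mathbf{P}_\varepsilon^\beta u_n^\varepsilon\right)$, where $\Pi_n$ denotes the $L^2$-projection onto $\mathbb{S}_n$, supplies an additional uniform bound for $\partial_{xx}^2 u_n^\varepsilon$ in $L^\infty(0,1;H^{-1}(0,1))$. Banach--Alaoglu then extracts a (non-relabelled) subsequence with $u_n^\varepsilon \rightharpoonup^{*} u_\beta^\varepsilon$ in $L^\infty(0,1;H_0^1)$, $\partial_x u_n^\varepsilon \rightharpoonup^{*} \partial_x u_\beta^\varepsilon$ in $L^\infty(0,1;L^2)$, and $\partial_{xx}^2 u_n^\varepsilon \rightharpoonup \partial_{xx}^2 u_\beta^\varepsilon$ in $L^2(0,1;H^{-1})$. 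Linearity and continuity of $\mathbf{P}_\varepsilon^\beta:H^1(0,1)\to L^2(0,1)$ propagate the weak $H^1$-convergence into weak $L^2$-convergence of $\mathbf{P}_\varepsilon^\beta u_n^\varepsilon$. Fixing $m$ and letting $n\to\infty$ in \eqref{IVP4} against any $\psi\in\mathbb{S}_m$ recovers \eqref{IVP3} for such $\psi$, and density of $\bigcup_m\mathbb{S}_m$ in $H_0^1(0,1)$ extends the identity to every admissible test function.

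To meet the regularity requirements of Definition~\ref{weaksol} and to make sense of the initial conditions, I would invoke the standard wave-type regularity result: any $u$ with $u\in L^\infty(0,1;H_0^1)$, $\partial_x u\in L^\infty(0,1;L^2)$, and $\partial_{xx}^2 u\in L^2(0,1;H^{-1})$ admits a representative in $C([0,1];H_0^1)\cap C^1([0,1];L^2)$. This gives the continuity claims of the theorem and a pointwise sense to $u_\beta^\varepsilon(0)$ and $\partial_x u_\beta^\varepsilon(0)$; identifying them with $u_0^\varepsilon$ and $0$ is then routine using smooth test functions concentrated near $x=0$ together with \eqref{IVP5} and the weak limits. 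The main technical nuisance is the energy estimate in the first paragraph: since the wave-type equation carries no dissipation and since $\mathbf{P}_\varepsilon^\beta$ is controlled only in the full $H^1$-norm of $u$, the Gronwall loop must simultaneously absorb $\|u_n^\varepsilon\|$, $\|\partial_y u_n^\varepsilon\|$, and $\|\partial_x u_n^\varepsilon\|$, producing a constant $M(\varepsilon,\beta)$ that degenerates as $\beta\to 0$. This is harmless at fixed $\beta$ but already signals why the convergence analysis in Section~\ref{sec:4} will require the Carleman-type weighting announced there.
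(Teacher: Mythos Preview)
Your proposal is correct and follows essentially the same Galerkin/energy/Gronwall/compactness route as the paper's own proof. The only cosmetic difference is in the passage to the limit in the $\mathbf{P}_\varepsilon^\beta$-term: you exploit that a bounded linear operator $H^1\to L^2$ is weak--weak continuous, whereas the paper first extracts strong convergence $u_n^\varepsilon\to u^\varepsilon$ in $L^2(0,1;H_0^1(0,1))$ via Aubin--Lions and then passes to the limit; both arguments are valid and lead to the same conclusion.
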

\begin{proof}
	To prove this theorem, we need to derive some energy estimates for approximate solutions $u_n^{\varepsilon}$. Thanks to Lemma \ref{lem:C1}, we can prove that $\partial_{x}u_n^{\varepsilon}\in C([0,1];\mathbb{S}_{n})$. Taking in (\ref{IVP4}) $\psi =\partial_{x}u_n^{\varepsilon}$, we find that
	\begin{align}
	\frac{d}{dx}\left[\left\Vert \partial_{x}u_{n}^{\varepsilon}\right\Vert ^{2}+\left\Vert \partial_{y}u_{n}^{\varepsilon}\right\Vert ^{2}\right] & =-2\left\langle \mathbf{P}_{\varepsilon}^{\beta}u_{n}^{\varepsilon},\partial_{x}u_{n}^{\varepsilon}\right\rangle \nonumber \\
	& \le C_{1}\log\left(\gamma\left(\beta\right)\right)\left\Vert u_{n}^{\varepsilon}\right\Vert _{H^{1}\left(0,1\right)}^{2}+C_{1}\log\left(\gamma\left(\beta\right)\right)\left\Vert \partial_{x}u_{n}^{\varepsilon}\right\Vert ^{2}. \label{est1}
	\end{align}
	The norms $\left\Vert u_{n}^{\varepsilon}\right\Vert_{H_0^1(0,1)}=\left\Vert \partial_{y}u_{n}^{\varepsilon}\right\Vert$ and $\left\Vert u_{n}^{\varepsilon}\right\Vert_{H^1(0,1)}$ are equivalent\footnote{By the Poincar\'e inequality, we particularly have $\frac{1}{\sqrt{2}}\left\Vert u_{n}^{\varepsilon}\right\Vert_{H^1(0,1)}\le \left\Vert u_{n}^{\varepsilon}\right\Vert_{H_0^1(0,1)} \le \left\Vert u_{n}^{\varepsilon}\right\Vert_{H^1(0,1)}$.} based upon the zero trace. Thereupon, by integrating the estimate (\ref{est1}) with respect to $x$, we arrive at
	\begin{align*}
	& \left\Vert \partial_{x}u_{n}^{\varepsilon}\left(x,\cdot\right)\right\Vert ^{2}+\left\Vert u_{n}^{\varepsilon}\left(s,\cdot\right)\right\Vert _{H_{0}^{1}\left(0,1\right)}^{2}\\
	& \le\left\Vert \partial_{y}u_{n}^{\varepsilon}\left(0,\cdot\right)\right\Vert ^{2}+2C_{1}\log\left(\gamma\left(\beta\right)\right)\int_{0}^{x}\left(\left\Vert u_{n}^{\varepsilon}\left(s,\cdot\right)\right\Vert _{H_{0}^{1}\left(0,1\right)}^{2}+\left\Vert \partial_{x}u_{n}^{\varepsilon}\left(s,\cdot\right)\right\Vert ^{2}\right)ds.
	\end{align*}
	Using Gronwall's inequality, we get
	\begin{align}\label{key1}
	\left\Vert \partial_{x}u_{n}^{\varepsilon}\left(x,\cdot\right)\right\Vert ^{2}+\left\Vert u_{n}^{\varepsilon}\left(x,\cdot\right)\right\Vert _{H_{0}^{1}\left(0,1\right)}^{2}\le\left\Vert \partial_{y}u_{n}^{\varepsilon}\left(0,\cdot\right)\right\Vert ^{2}\gamma^{2C_{1}x}(\beta).
	\end{align}
	In view of (\ref{IVP5}), it is straightforward to see that $\left\Vert \partial_{y}u_{n}^{\varepsilon}\left(0,\cdot\right)\right\Vert\le C$ for any $\varepsilon > 0$. Thus, for any $n\in\mathbb{N}$ we have
	\begin{align*}
	& \gamma^{-C_{1}}\left(\beta\right)u_{n}^{\varepsilon}\;\text{is bounded in }L^{\infty}\left(0,1;H_{0}^{1}\left(0,1\right)\right),\\
	& \gamma^{-C_{1}}\left(\beta\right)\partial_{x}u_{n}^{\varepsilon}\;\text{is bounded in }L^{\infty}\left(0,1;L^{2}\left(0,1\right)\right).
	\end{align*}
	It follows from the Banach--Alaoglu theorem, and the argument that a weak limit of derivatives is the derivative of the weak limit, that we can extract a subsequence of scaled approximate solutions $\gamma^{-C_{1}}\left(\beta\right) u_{n}^{\varepsilon}$, which we still denote by $\left\{\gamma^{-C_{1}}\left(\beta\right)u_{n}^{\varepsilon}\right\}_{n\in\mathbb{N}}$, such that for each $\varepsilon>0$,
	\begin{align}
	&\label{weak1} \gamma^{-C_{1}}\left(\beta\right)u_{n}^{\varepsilon}\to\gamma^{-C_{1}}\left(\beta\right)u^{\varepsilon}\;\text{weakly}-*\;\text{in }L^{\infty}\left(0,1;H_{0}^{1}\left(0,1\right)\right),\\
	&\label{weak2} \gamma^{-C_{1}}\left(\beta\right)\partial_{x}u_{n}^{\varepsilon}\to\gamma^{-C_{1}}\left(\beta\right)\partial_{x}u^{\varepsilon}\;\text{weakly}-*\;\text{in }L^{\infty}\left(0,1;L^{2}\left(0,1\right)\right).
	\end{align}
	We remark that (\ref{weak1}) holds if and only if
	\[
	\int_{0}^{1}\left\langle \gamma^{-C_{1}}\left(\beta\right)u_{n}^{\varepsilon}\left(x,\cdot\right),w\right\rangle dx\to\int_{0}^{1}\left\langle \gamma^{-C_{1}}\left(\beta\right)u^{\varepsilon}\left(x,\cdot\right),w\right\rangle dx\quad\text{for }w\in L^{1}\left(0,1;H^{-1}\left(0,1\right)\right),
	\]
	which implies that
	\begin{align} \label{weakk4}
	u_{n}^{\varepsilon}\to u^{\varepsilon}\;\text{weakly}-*\;\text{in }L^{\infty}\left(0,1;H_{0}^{1}\left(0,1\right)\right).
	\end{align}
	The same result is obtained for the weak-star convergence in (\ref{weak2}), viz.
	\begin{align}\label{weak4}
		\partial_{x}u_{n}^{\varepsilon}\to\partial_{x}u^{\varepsilon}\;\text{weakly}-*\;\text{in }L^{\infty}\left(0,1;L^{2}\left(0,1\right)\right).
	\end{align}
	Now, using the Galerkin equation (\ref{IVP4}) we can show that $\partial_{x^2}^2u_{n}^{\varepsilon}\in L^2(0,1;\mathbb{S}_{n})$. Thus, we obtain the uniform bound with respect to $n$ of $\partial_{x^2}^2u_{n}^{\varepsilon}$ as follows:
	\begin{align*}
	& \gamma^{-C_{1}}\left(\beta\right)\left\Vert \partial_{x^{2}}^{2}u_{n}^{\varepsilon}\left(x,\cdot\right)\right\Vert _{H^{-1}\left(0,1\right)}=\sup_{\psi\in H^{1}\left(0,1\right)\backslash\left\{ 0\right\} }\frac{\gamma^{-C_{1}}\left(\beta\right)\left\langle \partial_{x^{2}}^{2}u_{n}^{\varepsilon}\left(x,\cdot\right),\psi\right\rangle }{\left\Vert \psi\right\Vert _{H_0^{1}\left(0,1\right)}}\\
	& =\sup_{\psi\in H^{1}\left(0,1\right)\backslash\left\{ 0\right\} }\frac{-\gamma^{-C_{1}}\left(\beta\right)\left\langle \partial_{y}u_{n}^{\varepsilon}\left(x,\cdot\right),\partial_{y}\psi\right\rangle -\gamma^{-C_{1}}\left(\beta\right)\left\langle \mathbf{P}_{\varepsilon}^{\beta}u_{n}^{\varepsilon}\left(x,\cdot\right),\partial_{y}\psi\right\rangle }{\left\Vert \psi\right\Vert _{H_0^{1}\left(0,1\right)}}\\
	& \le C\gamma^{-C_{1}}\left(\beta\right)\left\Vert u_{n}^{\varepsilon}\left(x,\cdot\right)\right\Vert _{H_{0}^{1}\left(0,1\right)}.
	\end{align*}
	Henceforth, by squaring this estimate, integrating the resulting with respect to $x$ and then using the Banach--Alaoglu theorem we can choose the subsequence of $\gamma^{-C_{1}}\left(\beta\right) u_{n}^{\varepsilon}$ so that
	\[
	\gamma^{-C_{1}}\left(\beta\right)\partial_{x^{2}}^{2}u_{n}^{\varepsilon}\to\gamma^{-C_{1}}\left(\beta\right)\partial_{x^{2}}^{2}u^{\varepsilon}\;\text{weakly in }L^{2}\left(0,1;H^{-1}\left(0,1\right)\right),
	\]
	which leads to
	\[
	\partial_{x^{2}}^{2}u_{n}^{\varepsilon}\to\partial_{x^{2}}^{2}u^{\varepsilon}\;\text{weakly in }L^{2}\left(0,1;H^{-1}\left(0,1\right)\right).
	\]
	Combining the above weak-star and weak limits, the function $u^{\varepsilon}$ satisfies
	\begin{align}\label{weak3}
	u^{\varepsilon}\in L^{\infty}\left(0,1;H_{0}^{1}\left(0,1\right)\right),\quad\partial_{x}u^{\varepsilon}\in L^{\infty}\left(0,1;L^{2}\left(0,1\right)\right),\quad\partial_{x^{2}}^{2}u^{\varepsilon}\in L^{2}\left(0,1;H^{-1}\left(0,1\right)\right).
	\end{align}
	Furthermore, using the Aubin--Lions lemma in combination with the Rellich--Kondrachov theorem $H_0^1(0,1)\subset L^2(0,1)$ for (\ref{weakk4}) and (\ref{weak4}), we get
	\begin{align}\label{weak6}
		u_{n}^{\varepsilon}\to u^{\varepsilon}\;\text{strongly}\;\text{in }L^{2}\left(0,1;H_0^1\left(0,1\right)\right).
	\end{align}
	
	Now we multiply both sides of the Galerkin equation (\ref{IVP4}) by an $x$-dependent test function $\tilde{w}\in C_{c}^{\infty}(0,1)$, then by integrating the resulting equation with respect to $x$ we arrive at
	\[
	\int_{0}^{1}\left\langle \frac{\partial^{2}}{\partial x^{2}}u_{n}^{\varepsilon},\upsilon\right\rangle dx+\int_{0}^{1}\left\langle \partial_{y}u_{n}^{\varepsilon},\partial_{y}\upsilon\right\rangle dx+\int_{0}^{1}\left\langle \mathbf{P}_{\varepsilon}^{\beta}u_{n}^{\varepsilon},\upsilon\right\rangle dx=0,
	\]
	where we have denoted by $\upsilon = \upsilon(x,y) = \tilde{w}(x)\psi(y)$ for $\psi\in\mathbb{S}_{n}$. Hence, passing the limit of this equation as $n\to\infty$ we obtain
	\begin{align}\label{limit}
	\int_{0}^{1}\left\langle \frac{\partial^{2}}{\partial x^{2}}u^{\varepsilon},\upsilon\right\rangle dx+\int_{0}^{1}\left\langle \partial_{y}u^{\varepsilon},\partial_{y}\upsilon\right\rangle dx+\int_{0}^{1}\left\langle \mathbf{P}_{\varepsilon}^{\beta}u^{\varepsilon},\upsilon\right\rangle dx=0,
	\end{align}
	where convergence of the second and third terms is guaranteed by  (\ref{weak6}). Equation (\ref{limit}) holds for $v=\tilde{w}\psi$ with $\psi\in H_0^1(0,1)$ and since $\tilde{w}\in C_{c}^{\infty}(0,1)$ is arbitrary, we deduce that the function $u^{\varepsilon}$ obtained from approximate solutions $u_n^{\varepsilon}$ satisfies the weak formulation (\ref{IVP3}) for every test function $\psi\in H_0^1(0,1)$.
	In addition, the arguments in (\ref{weak3}) enable us to show that
	\begin{align}\label{weak5}
	u^{\varepsilon}\in C([0,1];H_0^1(0,1)),\quad \partial_{x}u^{\varepsilon}\in C([0,1];L^2(0,1)),
	\end{align}
	where we have applied the Aubin--Lions lemma. Note that for the latter argument in (\ref{weak5}), we rely on the
	Gelfand triple $H_0^1(0,1)\subset L^2(0,1)\subset H^{-1}(0,1)$.
	
	It now remains to verify the initial data. Take $\kappa\in C^1([0,1])$ satisfying $\kappa(0)=1$ and $\kappa(1)=0$. It follows from (\ref{weak4}) that
	\[
	\int_{0}^{1}\left\langle \partial_{x}u_{n}^{\varepsilon},\psi\right\rangle \kappa\left(x\right)dx\to\int_{0}^{1}\left\langle \partial_{x}u^{\varepsilon},\psi\right\rangle \kappa\left(x\right)dx\quad\text{for }\psi\in L^{2}\left(0,1\right).
	\]
	Then by integration by parts, we have
	\[
	-\int_{0}^{1}\left\langle u_{n}^{\varepsilon},\psi\right\rangle \kappa_{x}dx-\left\langle u_{n}^{\varepsilon}\left(0\right),\psi\right\rangle \kappa\left(0\right)\to-\int_{0}^{1}\left\langle u^{\varepsilon},\psi\right\rangle \kappa_{x}dx-\left\langle u^{\varepsilon}\left(0\right),\psi\right\rangle \kappa\left(0\right)
	\]
	and thereupon, we get $\left\langle u_{n}^{\varepsilon}\left(0\right),\psi\right\rangle \to\left\langle u^{\varepsilon}\left(0\right),\psi\right\rangle $ for all $\psi\in H_0^1(0,1)$ by virtue of (\ref{weakk4}). Cf. (\ref{IVP5}) for the strong convergence of $u_n^{\varepsilon}(0)$ in $H^1(0,1)$, we obtain $\left\langle u_{n}^{\varepsilon}\left(0\right),\psi\right\rangle \to\left\langle u_{0}^{\varepsilon},\psi\right\rangle $ for all $\psi\in H^1(0,1)$. Hence, $\left\langle u^{\varepsilon}\left(0\right),\psi\right\rangle =\left\langle u_{0}^{\varepsilon},\psi\right\rangle $ for all $\psi \in H^1(0,1)$, which implies that $u^{\varepsilon}\left(0\right)=u_{0}^{\varepsilon}$ for a.e. in $(0,1)$. We complete the proof of the theorem.   
\end{proof}

Now we close this section by uniqueness of the weak solution of (\ref{IVP1})--(\ref{IVP2}). Proof of this result is still ended up with the use of some energy estimates and with exploiting the Gronwall inequality, eventually.

\begin{theorem}\label{thm:3.4}
	Assume (\ref{measure}) holds. For each $\varepsilon >0$, the regularized system (\ref{IVP1})--(\ref{IVP2}) admits a unique weak solution $u_{\beta}^{\varepsilon}$ in the sense of Definition \ref{weaksol}.
\end{theorem}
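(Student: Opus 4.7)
The existence half of the assertion is already supplied by Theorem \ref{thm:3.3}, so my plan is to record that fact at the outset and then concentrate the work on uniqueness. Concretely, given any two weak solutions $u_{1},u_{2}$ in the sense of Definition \ref{weaksol}, sharing the same data $u_{0}^{\varepsilon}$ and zero initial derivative, I will form the difference $w=u_{1}-u_{2}$, observe that $w$ still lies in the regularity class $C([0,1];H_{0}^{1}(0,1))\cap C^{1}([0,1];L^{2}(0,1))$ with $\partial_{x^{2}}^{2}w\in L^{2}(0,1;H^{-1}(0,1))$, that it satisfies the weak formulation (\ref{IVP3}) by linearity, and that $w(0)=0$, $\partial_{x}w(0)=0$.

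The core estimate is the hyperbolic energy identity obtained by testing (\ref{IVP3}) against $\partial_{x}w$. This leads to
\begin{align*}
\frac{d}{dx}\Bigl[\Vert\partial_{x}w\Vert^{2}+\Vert\partial_{y}w\Vert^{2}\Bigr]=-2\langle \mathbf{P}_{\varepsilon}^{\beta}w,\partial_{x}w\rangle,
\end{align*}
and the conditional estimate (\ref{PP}) together with Cauchy--Schwarz and Young's inequality yields
\begin{align*}
|\langle \mathbf{P}_{\varepsilon}^{\beta}w,\partial_{x}w\rangle|\le C_{1}\log(\gamma(\beta))\Vert w\Vert_{H^{1}(0,1)}\Vert\partial_{x}w\Vert\le C_{1}\log(\gamma(\beta))\bigl(\Vert w\Vert_{H_{0}^{1}(0,1)}^{2}+\Vert\partial_{x}w\Vert^{2}\bigr),
\end{align*}
where I have used the Poincar\'e equivalence of $\Vert\cdot\Vert_{H^{1}(0,1)}$ and $\Vert\partial_{y}(\cdot)\Vert$ exactly as in the proof of Theorem \ref{thm:3.3}. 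Setting $E(x):=\Vert\partial_{x}w(x,\cdot)\Vert^{2}+\Vert w(x,\cdot)\Vert_{H_{0}^{1}(0,1)}^{2}$ and integrating from $0$ to $x$ with the zero initial conditions gives $E(x)\le 2C_{1}\log(\gamma(\beta))\int_{0}^{x}E(s)\,ds$. Gronwall's inequality then forces $E\equiv 0$, hence $w\equiv 0$ and $u_{1}=u_{2}$.

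The main obstacle is a regularity one: the definition only guarantees $\partial_{x}w\in C([0,1];L^{2}(0,1))$, so $\partial_{x}w$ is not an admissible test function in $H_{0}^{1}(0,1)$ pointwise in $x$. I would circumvent this by performing the computation on the Galerkin level, where $\partial_{x}u_{n}^{\varepsilon}\in\mathbb{S}_{n}\subset H_{0}^{1}(0,1)$ is admissible (this is precisely the calculation already carried out in (\ref{est1})), and then identifying any two weak limits through the Lions-type uniqueness argument; alternatively, one can use the standard Ladyzhenskaya test function $\varphi(x,y)=\int_{x}^{\sigma}w(s,y)\,ds$ for $0\le x\le\sigma$ and $\varphi=0$ otherwise, which belongs to $H_{0}^{1}(0,1)$ in $y$ and satisfies $\partial_{x}\varphi=-w$ on $(0,\sigma)$, and integrate by parts in $x$ to convert the second-order term into an admissible duality pairing. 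Either route recovers the same energy inequality above, after which Gronwall closes the argument; no new smallness in $\varepsilon$ or $\beta$ is needed because $C_{1}\log(\gamma(\beta))$ is a finite constant for each fixed $\varepsilon$.
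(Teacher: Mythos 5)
Your proposal is correct and follows essentially the same route as the paper: form the difference of two weak solutions, test the weak formulation against its $x$-derivative, invoke the conditional estimate (\ref{PP}) with the Poincar\'e equivalence of norms, and close with Gronwall's inequality and the zero initial data, exactly as the paper does by referring back to the derivation of (\ref{key1}). The one place you go beyond the paper is in flagging that $\partial_{x}w$ is not an admissible $H_{0}^{1}$-valued test function and offering the Ladyzhenskaya-type remedy $\varphi(x,y)=\int_{x}^{\sigma}w(s,y)\,ds$; the paper simply takes $\psi=\partial_{x}d^{\varepsilon}$ directly, so your treatment is, if anything, the more careful one (note only that your Galerkin-level alternative by itself would establish uniqueness of the constructed limit rather than of an arbitrary weak solution, so the Ladyzhenskaya route is the one to keep).
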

\begin{proof}
	We sketch out some important steps because this proof is standard. Indeed, let $u_1^{\varepsilon}$ and $u_2^{\varepsilon}$ be two weak solutions that we have obtained in Theorem \ref{thm:3.3}. We can see that the function $d^{\varepsilon} = u_1^{\varepsilon} - u_2^{\varepsilon}$ satisfies a linear wave equation with zero data ($d^{\varepsilon}=\partial_{x}d^{\varepsilon} = 0$) by virtue of the linearity of $\mathbf{P}_{\beta}^{\varepsilon}$. Similar to (\ref{IVP3}), the equation for $d^{\varepsilon}\in C([0,1];H_0^1(0,1))$ reads as
	\[
	\left\langle \frac{\partial^{2}}{\partial x^{2}}d^{\varepsilon},\psi\right\rangle
	+ \left\langle \partial_{y}d^{\varepsilon},\partial_{y}\psi\right\rangle
	+\left\langle \mathbf{P}_{\varepsilon}^{\beta}d^{\varepsilon},\psi\right\rangle =0.
	\]
	Then taking $\psi = \partial_{x}d^{\varepsilon}$, we proceed as
	in the way to get the estimate (\ref{key1}) in proof of Theorem \ref{thm:3.3}. Hence, $d^{\varepsilon}=0$ a.e. in $(0,1)$ because of the fact that
	\[
	\left\Vert \partial_{x}d^{\varepsilon}\left(x,\cdot\right)\right\Vert ^{2}+\left\Vert d^{\varepsilon}\left(x,\cdot\right)\right\Vert _{H_{0}^{1}\left(0,1\right)}^{2}\le 0.
	\]
	This completes the proof of the theorem.
\end{proof}

\section{Convergence analysis}\label{sec:4}

In this section, we are concerned about the rate of convergence of the regularized problem $u_{\beta}^{\varepsilon}$ towards a certain true solution $u$. The estimate is pointwise in $\varepsilon$. We come up with the error estimation by using a Carleman weight function when looking for a  scaled difference $u_{\beta}^{\varepsilon} - u$. This way is different from the proof of weak solvability of the regularized system (\ref{IVP1})--(\ref{IVP2}) because of the presence of the perturbing operator acting on the true solution and of the large conditional estimate of the stabilized operator in the difference formulation. It is worth mentioning that the scaled difference equation is essentially a damped wave equation, where the damped terms play a crucial role in controlling all involved large quantities. The original notion behind the choice of Carleman weights for convergence analysis in the field of inverse and ill-posed problems is the designation of a strictly convex cost functional, which is usually called \emph{convexification}; see e.g. the monograph \cite{Beilina2012} for a complete background of this method. In this work, we choose an exponentially decreasing weight such that it ``maximizes" the presence of initial data $u_{0}^{\varepsilon}$.

\begin{theorem}\label{thm:err1}
	Assume (\ref{measure}) holds. Let $u\in C([0,1];\mathbb{W})$ be a unique solution of the Laplace system (\ref{original1}), where the space $\mathbb{W}$ is obtained by the perturbation $\mathbf{Q}_{\varepsilon}^{\beta}$ in Definition \ref{def1}. Let $u_{\beta}^{\varepsilon}$ be a unique weak solution of the regularized system (\ref{IVP1})--(\ref{IVP2}) analyzed in Theorems \ref{thm:3.3}--\ref{thm:3.4}. By choosing $\gamma(\beta)\ge 1$ such that $2C_1\log\left(\gamma\left(\beta\right)\right)/3>1$, the following error estimate holds:
	\begin{align}\label{err1}
	& \left\Vert \partial_{x}u_{\beta}^{\varepsilon}\left(x,\cdot\right)-\partial_{x}u\left(x,\cdot\right)\right\Vert ^{2}+\left\Vert u_{\beta}^{\varepsilon}\left(x,\cdot\right)-u\left(x,\cdot\right)\right\Vert_{H^1(0,1)}^{2}\\
	& \le\frac{4}{3}C_{1}^{2}\left[4\varepsilon^{2}+\frac{27}{8}C_{0}^{2}C_{1}^{-3}\gamma^{-2}\left(\beta\right)\log^{-3}(\gamma(\beta))x\left\Vert u\right\Vert _{C\left([0,1];\mathbb{W}\right)}^{2}\right]\log^{2}\left(\gamma\left(\beta\right)\right)\gamma^{7C_{1}x/3}(\beta).\nonumber
	\end{align}
\end{theorem}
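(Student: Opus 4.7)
The plan is to reduce everything to a weighted energy estimate for the error $w^{\varepsilon}:=u_{\beta}^{\varepsilon}-u$. Since $\mathbf{P}_{\varepsilon}^{\beta}=\mathbf{Q}_{\varepsilon}^{\beta}+2\partial^{2}/\partial y^{2}$, the harmonic true solution satisfies $\partial_{xx}^{2}u-\partial_{yy}^{2}u+\mathbf{P}_{\varepsilon}^{\beta}u = \partial_{xx}^{2}u+\partial_{yy}^{2}u+\mathbf{Q}_{\varepsilon}^{\beta}u=\mathbf{Q}_{\varepsilon}^{\beta}u$. Subtracting from (\ref{IVP1}) yields a Dirichlet--Cauchy wave-type equation for $w^{\varepsilon}$ with source $-\mathbf{Q}_{\varepsilon}^{\beta}u$, initial datum $w^{\varepsilon}(0)=u_{0}^{\varepsilon}-u_{0}$ of $H^{1}$-norm at most $\varepsilon$ by (\ref{measure}), and $\partial_{x}w^{\varepsilon}(0)=0$.

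I would then test this error PDE against $\partial_{x}w^{\varepsilon}$ in $L^{2}(0,1)$ and integrate by parts in $y$ using the zero Dirichlet trace to produce the standard wave identity $\frac{1}{2}\frac{d}{dx}\bigl[\|\partial_{x}w^{\varepsilon}\|^{2}+\|\partial_{y}w^{\varepsilon}\|^{2}\bigr]=-\langle\mathbf{P}_{\varepsilon}^{\beta}w^{\varepsilon}+\mathbf{Q}_{\varepsilon}^{\beta}u,\partial_{x}w^{\varepsilon}\rangle$. Both right-hand contributions are then bounded via the conditional estimates (\ref{PP})--(\ref{QQ}): up to Young's inequality with parameters $\alpha_{1},\alpha_{2}>0$, the $\mathbf{P}$-term is controlled by $\frac{C_{1}^{2}\log^{2}(\gamma)}{2\alpha_{1}}\|w^{\varepsilon}\|_{H^{1}}^{2}+\alpha_{1}\|\partial_{x}w^{\varepsilon}\|^{2}$ and the $\mathbf{Q}$-term by $\frac{C_{0}^{2}}{4\alpha_{2}\gamma^{2}}\|u\|_{\mathbb{W}}^{2}+\alpha_{2}\|\partial_{x}w^{\varepsilon}\|^{2}$. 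The Poincar\'e inequality $\|w^{\varepsilon}\|_{H^{1}}^{2}\le 2\|\partial_{y}w^{\varepsilon}\|^{2}$ from Section \ref{sec:3} then collapses all quadratic residuals to the single wave-energy quantity $F_{0}:=\|\partial_{x}w^{\varepsilon}\|^{2}+\|\partial_{y}w^{\varepsilon}\|^{2}$.

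The crux is the Carleman-like weight $\Phi(x):=\gamma^{-pC_{1}x}(\beta)$. Differentiating $\Phi F_{0}$ contributes the extra dissipative term $\Phi'F_{0}=-pC_{1}\log(\gamma)\Phi F_{0}$, and the triple $(p,\alpha_{1},\alpha_{2})$ is to be tuned so that this dissipation absorbs every residual $F_{0}$-contribution on the right. A short balancing calculation, taking $\alpha_{1}\sim\frac{3}{2}C_{1}\log(\gamma)$ and $\alpha_{2}\sim\frac{1}{3}C_{1}\log(\gamma)$, identifies $p=7/3$ as the sharp choice; this is exactly where the hypothesis $\frac{2}{3}C_{1}\log(\gamma)>1$ enters, guaranteeing that the prescribed Young parameters are coercive. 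After absorption, the weighted inequality reduces to $\frac{d}{dx}(\Phi F_{0})\lesssim \Phi(x)\|u\|_{\mathbb{W}}^{2}/(C_{1}\log(\gamma)\gamma^{2})$.

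The conclusion then follows by integrating from $0$ to $x$, invoking $F_{0}(0)\le\|u_{0}^{\varepsilon}-u_{0}\|_{H^{1}}^{2}\le\varepsilon^{2}$ from (\ref{measure}), and dividing by $\Phi(x)$ to expose the amplification $\gamma^{7C_{1}x/3}$ together with the $x$-weighted source contribution; a final application of Poincar\'e restores $\|w^{\varepsilon}\|_{H^{1}}^{2}$ on the left. The main obstacle will be the constant-tracking in the absorption step, because the $\mathbf{P}$-coupling generates contributions of order $C_{1}^{2}\log^{2}(\gamma)$ that must be swallowed by a dissipation of only order $C_{1}\log(\gamma)$; the specific constants $\frac{4}{3}$, $4$, $\frac{27}{8}$ and the $\log^{2}(\gamma)$ amplification on $\varepsilon^{2}$ in (\ref{err1}) reflect the tightest simultaneous balance, and any looser tuning would inflate either the exponent of $\gamma$ or the prefactor.
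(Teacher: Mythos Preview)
Your plan is sound and would establish an inequality of the same shape as (\ref{err1}) with the same exponent $7C_{1}x/3$, but the paper's implementation differs in one mechanical respect that accounts for the precise constants. Rather than keeping $w^{\varepsilon}=u_{\beta}^{\varepsilon}-u$ and multiplying the energy $F_{0}$ by an external weight $\Phi$, the paper builds the weight into the unknown by setting $w=(u_{\beta}^{\varepsilon}-u)e^{-\rho_{\beta}x}$ with $\rho_{\beta}:=\tfrac{2}{3}C_{1}\log(\gamma)$. This produces a genuinely \emph{damped} wave equation $w_{xx}-w_{yy}+2\rho_{\beta}w_{x}+\rho_{\beta}^{2}w=-\mathbf{P}_{\varepsilon}^{\beta}w+e^{-\rho_{\beta}x}\mathbf{Q}_{\varepsilon}^{\beta}u$; testing with $w_{x}$ now yields a dissipation $4\rho_{\beta}\|w_{x}\|^{2}$ together with an extra zero-order energy $\rho_{\beta}^{2}\|w\|^{2}$. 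The value of $\rho_{\beta}$ is chosen so that the damping exactly cancels the $\|w_{x}\|^{2}$ residuals from the Cauchy--Schwarz splittings (balance $3\rho_{\beta}=2C_{1}\log(\gamma)$), while the leftover $C_{1}\log(\gamma)\bigl(\|w\|^{2}+\|w_{y}\|^{2}\bigr)$ is \emph{not} absorbed but handled by Gronwall, contributing a further factor $\gamma^{C_{1}x}$; combined with the back-substitution $e^{2\rho_{\beta}x}=\gamma^{4C_{1}x/3}$ this gives $7/3$. The specific constants in (\ref{err1})---the prefactor $\tfrac{4}{3}C_{1}^{2}\log^{2}(\gamma)$ and the $\log^{-3}(\gamma)$ on the source---all arise from the $\rho_{\beta}^{\pm2}$ and $\rho_{\beta}^{-3}$ scalings in this change-of-variable decomposition, and your purely external-weight scheme will not reproduce them verbatim (in fact your route gives a sharper $\varepsilon^{2}$-term with no $\log^{2}(\gamma)$ amplification, but only $\log^{-1}(\gamma)$ on the source). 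Two small corrections to your outline: the hypothesis $\tfrac{2}{3}C_{1}\log(\gamma)>1$ is not about Young-parameter positivity---in the paper it is used only in the initial-data estimate, to guarantee $\rho_{\beta}^{-2}\le 1$ so that $3\|u_{0}^{\varepsilon}-u_{0}\|^{2}+\rho_{\beta}^{-2}\|\partial_{y}(u_{0}^{\varepsilon}-u_{0})\|^{2}\le 4\varepsilon^{2}$; and in your direct-absorption scheme the exponent $7/3$ is not sharp (any $p>\sqrt{2}$ closes the balancing), so you are selecting $p=7/3$ to match the statement rather than out of necessity.
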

\begin{proof}
	Let $w = \left(u_{\beta}^{\varepsilon} - u\right)\exp\left(-\rho_{\beta} x\right)$ for $\rho_{\beta} > 0$  being chosen later. From (\ref{IVP1}) and (\ref{original1}), we compute the following difference equation:
	\begin{align}\label{w}
	w_{xx}-w_{yy}+2\rho_{\beta}w_{x}+\rho_{\beta}^{2}w=-\mathbf{P}_{\varepsilon}^{\beta}w+e^{-\rho_{\beta}x}\mathbf{Q}_{\varepsilon}^{\beta}u\quad\text{in } (0,1)\times(0,1).
	\end{align}
	This equation is associated with the Dirichlet boundary condition and the initial conditions:
	\begin{align}\label{ww}
	\begin{cases}
	w\left(x,0\right)=w\left(x,1\right)=0 & \text{for }x\in [0,1],\\
	w\left(0,y\right)=u_{0}^{\varepsilon}\left(y\right) - u_0(y),\partial_{x}w\left(0,y\right)= - \rho_{\beta}w(0,y) & \text{for }y\in [0,1].
	\end{cases}
	\end{align}
	Multiplying both sides of (\ref{w}) by $w_x$ and integrating the resulting equation with respect to $y$ from 0 to 1, we have
	\begin{align}
		\frac{d}{dx}\left\Vert w_{x}\left(x,\cdot\right)\right\Vert ^{2}&+\frac{d}{dx}\left\Vert w_{y}\left(x,\cdot\right)\right\Vert ^{2}+2\rho_{\beta}^{2}\frac{d}{dx}\left\Vert w\left(x,\cdot\right)\right\Vert ^{2}+4\rho_{\beta}\left\Vert w_{x}\left(x,\cdot\right)\right\Vert ^{2} \nonumber\\
		&=2e^{-\rho_{\beta}x}\left\langle \mathbf{Q}_{\varepsilon}^{\beta}u,w_{x}\right\rangle -2\left\langle \mathbf{P}_{\varepsilon}^{\beta}w,w_{x}\right\rangle =:I_{1}+I_{2}.\label{w11}
	\end{align}
	Using the conditional estimates (\ref{QQ}), (\ref{PP}) and applying the Cauchy--Schwarz inequality, we estimate $I_1$ and $I_2$ in (\ref{w11}) in the following manner:
	\begin{align*}
	I_{1} & \le C_{0}^{2}e^{-2\rho_{\beta}x}\gamma^{-2}\left(\beta\right)\rho_{\beta}^{-1}\left\Vert u\left(x,\cdot\right)\right\Vert _{\mathbb{W}}^{2}+\rho_{\beta}\left\Vert w_{x}\left(x,\cdot\right)\right\Vert ^{2},\\
	 I_{2}&\le 2C_1 \log\left(\gamma(\beta)\right)\left(\left\Vert w\left(x,\cdot\right)\right\Vert \left\Vert w_x\left(x,\cdot\right)\right\Vert + \left\Vert w_y\left(x,\cdot\right)\right\Vert \left\Vert w_x\left(x,\cdot\right)\right\Vert
	 \right)
	 \\&\le 
	 C_{1}\log\left(\gamma\left(\beta\right)\right)\left(\left\Vert w\left(x,\cdot\right)\right\Vert ^{2}+ \left\Vert w_y\left(x,\cdot\right)\right\Vert ^{2}\right)+2C_1\log\left(\gamma\left(\beta\right)\right)\left\Vert w_{x}\left(x,\cdot\right)\right\Vert ^{2}.
	\end{align*}
	Henceforth, the left-hand side of (\ref{w11}) is bounded by
	\begin{align}
	&\rho_{\beta}^{-2}\frac{d}{dx}\left\Vert w_{x}\left(x,\cdot\right)\right\Vert ^{2}+
	\rho_{\beta}^{-2}\frac{d}{dx}\left\Vert w_{y}\left(x,\cdot\right)\right\Vert ^{2}+ 2\frac{d}{dx}\left\Vert w\left(x,\cdot\right)\right\Vert ^{2} \nonumber \\
	& \le C_{0}^{2}e^{-2\rho_{\beta}x}\gamma^{-2}\left(\beta\right)\rho_{\beta}^{-3}\left\Vert u\left(x,\cdot\right)\right\Vert _{\mathbb{W}}^{2}+C_{1}\log\left(\gamma\left(\beta\right)\right)\rho_{\beta}^{-2}\left(\left\Vert w\left(x,\cdot\right)\right\Vert ^{2} + \left\Vert w_y\left(x,\cdot\right)\right\Vert ^{2}\right) \nonumber \\
	& +\rho_{\beta}^{-2}\left(2C_1\log\left(\gamma\left(\beta\right)\right)-3\rho_{\beta}\right)\left\Vert w_{x}\left(x,\cdot\right)\right\Vert ^{2}.\label{w12}
	\end{align}
	Now, choosing in (\ref{w12}) that $\rho_{\beta} = 2C_1\log\left(\gamma\left(\beta\right)\right)/3>1$,  integrating the resulting estimate with respect to $x$ from 0 to $x_1$, we get
	\begin{align}
	& \rho_{\beta}^{-2}\left(\left\Vert w_{x}\left(x_{1},\cdot\right)\right\Vert ^{2}+\left\Vert w_{y}\left(x_{1},\cdot\right)\right\Vert ^{2}\right)+\left\Vert w\left(x_{1},\cdot\right)\right\Vert ^{2} \nonumber\\
	& \le\rho_{\beta}^{-2}\left(\left\Vert w_{x}\left(0,\cdot\right)\right\Vert ^{2}+\left\Vert w_{y}\left(0,\cdot\right)\right\Vert ^{2}\right)+2\left\Vert w\left(0,\cdot\right)\right\Vert ^{2}+C_{0}^{2}\gamma^{-2}\left(\beta\right)\rho_{\beta}^{-3}\int_{0}^{x_{1}}e^{-2\rho_{\beta}x}\left\Vert u\left(x,\cdot\right)\right\Vert _{\mathbb{W}}^{2} \nonumber\\
	& +C_1\log(\gamma(\beta))\int_{0}^{x_{1}}\left(\left\Vert w\left(x,\cdot\right)\right\Vert ^{2} + \rho_{\beta}^{-2}\left\Vert w_y\left(x,\cdot\right)\right\Vert ^{2}\right)dx.
	\label{haha}
	\end{align}
	Due to (\ref{measure}), we estimate that
	\begin{align} \label{intialest}
	\rho_{\beta}^{-2}\left(\left\Vert w_{x}\left(0,\cdot\right)\right\Vert ^{2}+\left\Vert w_{y}\left(0,\cdot\right)\right\Vert ^{2}\right)+2\left\Vert w\left(0,\cdot\right)\right\Vert ^{2}=3\left\Vert u_{0}^{\varepsilon}-u_{0}\right\Vert ^{2}+\rho_{\beta}^{-2}\left\Vert \partial_{y}u_{0}^{\varepsilon}-\partial_{y}u_{0}\right\Vert ^{2}\le4\varepsilon^{2}.
	\end{align}
	Thus, using Gronwall's inequality we continue to estimate (\ref{haha}) as follows:
	\begin{align}\label{ye}
	\rho_{\beta}^{-2}\left(\left\Vert w_{x}\left(x_{1},\cdot\right)\right\Vert ^{2}+\left\Vert w_{y}\left(x_{1},\cdot\right)\right\Vert ^{2}\right)+\left\Vert w\left(x_{1},\cdot\right)\right\Vert ^{2}\le\left[4\varepsilon^{2}+C_{0}^{2}\gamma^{-2}\left(\beta\right)\rho_{\beta}^{-3}x_1\left\Vert u\right\Vert _{C\left([0,1];\mathbb{W}\right)}^{2}\right]\gamma^{C_1 x_1}(\beta).
	\end{align}
	It is now to use back-substitutions to get back the difference $u_{\beta}^{\varepsilon}-u$. In view of the fact that
	\begin{align*}
	\rho_{\beta}^{-2}e^{-2\rho_{\beta}x_{1}}\left\Vert \partial_{x}u_{\beta}^{\varepsilon}\left(x_{1},\cdot\right)-\partial_{x}u\left(x_{1},\cdot\right)\right\Vert ^{2} & \le\rho_{\beta}^{-2}\left(\rho_{\beta}\left\Vert w\left(x_{1},\cdot\right)\right\Vert +\left\Vert w_{x}\left(x_{1},\cdot\right)\right\Vert ^{2}\right)^{2}\\
	& \le2\left(\left\Vert w\left(x_{1},\cdot\right)\right\Vert ^{2}+\rho_{\beta}^{-2}\left\Vert w_{x}\left(x_{1},\cdot\right)\right\Vert ^{2}\right),
	\end{align*}
	we obtain the aimed estimate (\ref{err1}). Hence, we complete the proof of the theorem.
\end{proof}

Without loss of generality, we now assume that $C_1 < 6/7$ for our convergence analysis below. This assumption on $C_1$ is somehow similar to what has been assumed in \cite{Nguyen2019} for the parabolic case, where $C_1$ and the time domain are related to each other. In subsection \ref{subsec:5.1} below, we show a way to control the largeness of such $C_1$ in the framework of the truncation projection. Besides, the quantity $C_1$ can be largely selected due to the flexible choice of $\gamma(\beta)$.

\begin{theorem}\label{thm:err2} Under the assumptions of Theorem \ref{thm:err1}, if one has $C_1<6/7$ and chooses $\gamma=\gamma(\beta)\ge 1$ such that there holds
	\begin{align}\label{gamma}
	\lim_{\varepsilon\to0}\varepsilon\gamma\left(\beta\right)\log\left(\gamma\left(\beta\right)\right)=K_{1}\in\left(0,\infty\right),
	\end{align}
	then the following error estimate holds
	\begin{align}\label{err2}
	\left\Vert \partial_{x}u_{\beta}^{\varepsilon}\left(x,\cdot\right)-\partial_{x}u\left(x,\cdot\right)\right\Vert ^{2}+\left\Vert u_{\beta}^{\varepsilon}\left(x,\cdot\right)-u\left(x,\cdot\right)\right\Vert _{H^{1}\left(0,1\right)}^{2}\le C\gamma^{\frac{7C_{1}x}{3}-2}(\beta).
	\end{align}
	Consequently, if instead of (\ref{gamma}) we choose a stronger choice
	\begin{align}\label{gamma1}
	\lim_{\varepsilon\to0}\varepsilon\gamma\left(\beta\right)=K_{2}\in\left(0,\infty\right),
	\end{align}
	then it holds that
	\begin{align}\label{err3}
	\left\Vert u_{\beta}^{\varepsilon}\left(x,\cdot\right)-u\left(x,\cdot\right)\right\Vert ^{2}\le C\gamma^{\frac{7C_{1}x}{3}-2}(\beta).
	\end{align}
\end{theorem}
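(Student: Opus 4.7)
The plan is to view Theorem \ref{thm:err2} as a direct corollary of Theorem \ref{thm:err1} obtained by calibrating $\gamma(\beta)$ against the noise level $\varepsilon$. Distributing the right-hand side of (\ref{err1}) exhibits two contributions: a noise-driven term $\tfrac{16}{3}C_{1}^{2}\varepsilon^{2}\log^{2}(\gamma(\beta))\gamma^{7C_{1}x/3}(\beta)$ and a modeling error $\tfrac{9}{2}C_{0}^{2}C_{1}^{-1}x\|u\|_{C([0,1];\mathbb{W})}^{2}\log^{-1}(\gamma(\beta))\gamma^{7C_{1}x/3-2}(\beta)$. For (\ref{err2}) I would invoke (\ref{gamma}): for $\varepsilon$ sufficiently small one has $\varepsilon\gamma(\beta)\log(\gamma(\beta))\le K_{1}+1$, hence $\varepsilon^{2}\log^{2}(\gamma(\beta))\le(K_{1}+1)^{2}\gamma^{-2}(\beta)$, and the noise-driven contribution collapses to $C\gamma^{7C_{1}x/3-2}(\beta)$. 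The modeling error already carries the desired decay, and the residual $\log^{-1}(\gamma(\beta))$ is harmless once $\gamma(\beta)\ge e$; combining the two yields (\ref{err2}).

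For the weaker $L^{2}$ conclusion (\ref{err3}) under (\ref{gamma1}), the direct route through (\ref{err1}) is insufficient because $\varepsilon\gamma(\beta)\to K_{2}$ leaves an uncontrolled $\log^{2}(\gamma(\beta))$ multiplier on the noise term. Instead I would rewind the proof of Theorem \ref{thm:err1} to the intermediate estimate (\ref{ye}), namely $\|w(x_{1},\cdot)\|^{2}\le[4\varepsilon^{2}+C_{0}^{2}\rho_{\beta}^{-3}\gamma^{-2}(\beta)x_{1}\|u\|_{C([0,1];\mathbb{W})}^{2}]\gamma^{C_{1}x_{1}}(\beta)$. Undoing the Carleman substitution $w=(u_{\beta}^{\varepsilon}-u)e^{-\rho_{\beta}x}$ with $\rho_{\beta}=\tfrac{2}{3}C_{1}\log(\gamma(\beta))$ yields $\|u_{\beta}^{\varepsilon}(x,\cdot)-u(x,\cdot)\|^{2}=\gamma^{4C_{1}x/3}(\beta)\|w(x,\cdot)\|^{2}$, producing the same $\gamma^{7C_{1}x/3}(\beta)$ growth but \emph{without} the $\log^{2}$ amplification. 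Indeed, the $\log^{2}$ factor in (\ref{err1}) originally arose from the $\rho_{\beta}^{2}$ multiplier forced upon us when reconstructing $\partial_{x}u_{\beta}^{\varepsilon}-\partial_{x}u$ and $\partial_{y}u_{\beta}^{\varepsilon}-\partial_{y}u$ from $w_{x}$ and $w_{y}$; bypassing those reconstructions avoids it entirely. Under (\ref{gamma1}), $\varepsilon^{2}=O(\gamma^{-2}(\beta))$ is then enough to absorb the noise term, and (\ref{err3}) follows.

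The point worth flagging is not any analytical obstacle but a consistency check: the H\"older exponent $\tfrac{7C_{1}x}{3}-2$ is negative if and only if $x<6/(7C_{1})$, so the hypothesis $C_{1}<6/7$ is exactly what guarantees this holds uniformly on the entire interval $[0,1]$. This reflects the well-known qualitative behaviour of the elliptic Cauchy problem that reconstruction degrades as one moves away from the data-bearing boundary; controlling $C_{1}$ keeps the whole unit rectangle inside the zone of useful convergence. Since $\gamma(\beta)\to\infty$ as $\varepsilon\to 0$ by construction, both (\ref{err2}) and (\ref{err3}) then display genuine H\"older convergence in $\varepsilon$ through the chosen calibrations, with the stronger calibration (\ref{gamma}) paying for the $H^{1}$-norm control and the weaker (\ref{gamma1}) sufficing at the $L^{2}$ level.
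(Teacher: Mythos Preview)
Your proposal is correct and follows essentially the same route as the paper: for (\ref{err2}) you distribute (\ref{err1}) and invoke (\ref{gamma}) to absorb the noise term exactly as the paper does, and for (\ref{err3}) you correctly diagnose that (\ref{err1}) is too coarse, rewind to (\ref{ye}), drop the gradient terms, and back-substitute---which is precisely the paper's derivation of (\ref{hoho}) and (\ref{hohooo}). Your explanation of why the $\log^{2}$ factor arises from the $\rho_{\beta}^{2}$ multiplication needed to recover the $H^{1}$ norm, and your consistency check on $C_{1}<6/7$, match the paper's reasoning.
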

\begin{proof} As a by-product of the rigorous estimate (\ref{err1}), if the choice (\ref{gamma}) holds, one can deduce that
	\begin{align*}
	& \left\Vert \partial_{x}u_{\beta}^{\varepsilon}\left(x,\cdot\right)-\partial_{x}u\left(x,\cdot\right)\right\Vert ^{2}+\left\Vert u_{\beta}^{\varepsilon}\left(x,\cdot\right)-u\left(x,\cdot\right)\right\Vert _{H^{1}\left(0,1\right)}^{2}\\
	& \le\frac{4}{3}C_{1}^{2}\left[4\varepsilon^{2}\log^{2}\left(\gamma\left(\beta\right)\right)\gamma^{2}\left(\beta\right)+\frac{27}{8}C_{0}^{2}C_{1}^{-3}\log^{-1}\left(\gamma\left(\beta\right)\right)x\left\Vert u\right\Vert _{C\left([0,1];\mathbb{W}\right)}^{2}\right]\gamma^{\frac{7C_{1}x}{3}-2}(\beta)\\
	& \le\frac{4}{3}C_{1}^{2}\left[4K_{1}^{2}+\frac{27}{8}C_{0}^{2}C_{1}^{-3}\log^{-1}\left(\gamma\left(\beta\right)\right)x\left\Vert u\right\Vert _{C\left([0,1];\mathbb{W}\right)}^{2}\right]\gamma^{\frac{7C_{1}x}{3}-2}(\beta).
	\end{align*}
	Since $C_1 < 6/7$, we have $\gamma^{7C_1 x_1 /3 - 2}(\beta) \to 0$ as $\beta \to 0$, which guarantees the strong convergence of the scheme. Notice that if in (\ref{ye}) we drop the first two terms on the right-hand side (i.e. the gradient terms), we, after back-substitution, arrive at
	\begin{align}\label{hoho}
	\left\Vert u_{\beta}^{\varepsilon}\left(x,\cdot\right)-u\left(x,\cdot\right)\right\Vert ^{2}\le\left[4\varepsilon^{2}+C_{0}^{2}\gamma^{-2}\left(\beta\right)\rho_{\beta}^{-3}x\left\Vert u\right\Vert _{C\left([0,1];\mathbb{W}\right)}^{2}\right]\gamma^{7C_1 x/3 }.
	\end{align}
	Thus, using a stronger choice (\ref{gamma1}) (compared to (\ref{gamma})) we get
	\begin{align}\label{hohooo}
	\left\Vert u_{\beta}^{\varepsilon}\left(x,\cdot\right)-u\left(x,\cdot\right)\right\Vert ^{2}\le\left[4K_{2}^{2}+\frac{27}{8}C_{0}^{2}C_{1}^{-3}\log^{-3}\left(\gamma\left(\beta\right)\right)x\left\Vert u\right\Vert _{C\left([0,1];\mathbb{W}\right)}^{2}\right]\gamma^{\frac{7C_{1}x}{3}-2}(\beta).
	\end{align}
	Hence, we complete the proof of the theorem.
\end{proof}

\begin{remark}\label{rem:4.3} It is easy to see that if we adapt the choice (\ref{gamma}) to the estimate (\ref{hoho}), we then obtain a better rate of convergence than (\ref{err3}) in the following sense:
	\begin{align*}
	\left\Vert u_{\beta}^{\varepsilon}\left(x,\cdot\right)-u\left(x,\cdot\right)\right\Vert ^{2}\le C\gamma^{\frac{7C_{1}x}{3}-2}(\beta)\log^{-2}(\gamma(\beta)).
	\end{align*}
	As a consequence of (\ref{err2}), we rely on the compact embedding $H^1(0,1)\subset C[0,1]$ to conclude the H\"older convergence in $C([0,1]\times[0,1])$, viz.
	\[
	\sup_{\left(x,y\right)\in\left[0,1\right]^{2}}\left|u_{\beta}^{\varepsilon}\left(x,y\right)-u\left(x,y\right)\right|^{2}\le C\gamma^{\frac{7C_{1}}{3}-2}(\beta).
	\]
	As to the choice (\ref{gamma}), we can particularly take $\gamma(\beta) = \beta^{-1}$ with $\beta=\varepsilon^{1/2}$. Meanwhile, choosing $\gamma(\beta)= \beta^{-1}$ with $\beta = \varepsilon$ fulfills the choice (\ref{gamma1}). In principle, they are of the H\"older rate of convergence. Last but not least, we remark that since $2C_1\log\left(\gamma\left(\beta\right)\right)/3>1$ is acquired in the convergence analysis, we suppose a sufficiently small noise level $\varepsilon < e^{-3/C_1}$ for the particular choices have been made above.

\end{remark}

\section{Some crucial remarks and generalizations}\label{sec:5}
\subsection{A choice of the perturbing and stabilized operators in numerics}\label{subsec:5.1}
	The existence of the perturbing and stabilized operators has been pointed out in \cite{Nguyen2019}, theoretically. From the simulation standpoint, we need to truncate high Fourier frequencies during the simulation part. Nevertheless, we know that truncation of frequencies should be an appropriate noise-dependent procedure. In this regard, convergence is no longer guaranteed because the numerical perturbing and stabilized operators we use may not satisfy their established definitions. Therefore, such theoretical operators are not practical. In this work, we rely on the so-called Fourier truncation method to propose the following operator:
	\begin{align}\label{Q}
	\mathbf{Q}_{\varepsilon}^{\beta}u=2\sum_{\mu_{j}>\frac{1}{16}\log^2\left(\gamma\left(\beta\right)\right)}\mu_{j}\left\langle u,\phi_{j}\right\rangle \phi_{j}.
	\end{align}
	We can show in (\ref{Q}) that $\left\Vert \mathbf{Q}_{\varepsilon}^{\beta}u\right\Vert \le2\left\Vert u\right\Vert _{H^{2}\left(0,1\right)}$ for any $u\in H^2(0,1)$, which is obviously better than (\ref{QQ}), but our convergence analysis cannot work without the decay behavior of $\mathbf{Q}_{\varepsilon}^{\beta}$ in (\ref{QQ}). Aided by the Parseval identity, we estimate $\mathbf{Q}_{\varepsilon}^{\beta}$ in (\ref{Q}) as follows:
	\begin{align}\label{QQQ}
	\left\Vert \mathbf{Q}_{\varepsilon}^{\beta}u\right\Vert ^{2}\le\gamma^{-2}\left(\beta\right)\left\Vert \left(-\Delta e^{\sqrt{-\Delta}}\right)u\right\Vert ^{2},
	\end{align}
	where we have rewritten (\ref{QQ}) as
	\begin{align*}
	\mathbf{Q}_{\varepsilon}^{\beta}u & =2\sum_{\mu_{j}>\frac{1}{16}\log^2\left(\gamma\left(T,\beta\right)\right)}e^{-\sqrt{\mu_{j}}}\mu_{j}e^{\sqrt{\mu_{j}}}\left\langle u,\phi_{j}\right\rangle \phi_{j}.
	\end{align*}
	In (\ref{QQQ}), we obtain $\mathbb{W}$ as a characterization of a Gevrey space $G^{s/2}_{\sigma}$ with $s=2,\sigma=1$. This Gevrey space was postulated in \cite[Definition 2.1]{Tuan2015} for the first time in the field of inverse and ill-posed problems. Cf. \cite{Khoa2017}, we have the following relation for (\ref{original1}):
	\begin{align}\label{rela}
	\mu_{j}e^{\left(1-x\right)\sqrt{\mu_{j}}}\left(\left\langle u\left(x,\cdot\right),\phi_{j}\right\rangle +\frac{\left\langle u_{x}\left(x,\cdot\right),\phi_{j}\right\rangle }{\sqrt{\mu_{j}}}\right)=\mu_{j}\left\langle u\left(1,\cdot\right),\phi_{j}\right\rangle +\sqrt{\mu_{j}}\left\langle u_{x}\left(1,\cdot\right),\phi_{j}\right\rangle .
	\end{align}
	This means that it suffices to assume the true solution satisfies $u(1,\cdot)\in H^2(0,1)$ and $u_x(1,\cdot)\in H^1(0,1)$ to fulfill the Gevrey space $\mathbb{W}$. It is because of the fact that
	\begin{align}\label{rela1}
	\sup_{x\in\left[0,1\right]}\left\Vert \left(-\Delta e^{\sqrt{-\Delta}}\right)u\left(x,\cdot\right)\right\Vert ^{2}\le\sup_{x\in\left[0,1\right]}\left[\sum_{j\in\mathbb{N}}\mu_{j}^{2}e^{2\left(1-x\right)\sqrt{\mu_{j}}}\left(\left\langle u\left(x,\cdot\right),\phi_{j}\right\rangle +\frac{\left\langle u_{x}\left(x,\cdot\right),\phi_{j}\right\rangle }{\sqrt{\mu_{j}}}\right)^{2}\right].
	\end{align}
	Proofs of (\ref{rela}) and (\ref{rela1}) can be found in \ref{appendix-sec1}.
	On the order hand, this assumption is suitable when we consider the standard strong solution in $H^{2}((0,1)\times(0,1))$ of the Laplace equation, the forward system of (\ref{original1}), using the standard Sobolev embedding $H^2(0,1)\subset C^1[0,1]$.
	
	Now, using (\ref{Q}) we obtain the following stabilized operator:
	\begin{align}\label{Pnum}
	\mathbf{P}_{\varepsilon}^{\beta}u=-2\sum_{\mu_{j}\le\frac{1}{16}\log^2\left(\gamma\left(\beta\right)\right)}\mu_{j}\left\langle u,\phi_{j}\right\rangle \phi_{j}=-2\sum_{\mu_{j}\le\frac{1}{16}\log^2\left(\gamma\left(\beta\right)\right)}\mu_{j}^{1/2}\mu_{j}^{1/2}\left\langle u,\phi_{j}\right\rangle \phi_{j},
	\end{align}
	which essentially leads to (\ref{PP}) with $C_1 = 1/2$. Observe that we can control $C_1$ based upon the Fourier frequencies we want to cut off.

\subsection{A linearized version of the QR scheme}
Even though the QR scheme designed in (\ref{IVP1})--(\ref{IVP2}) is a linear mapping with respect to the regularized solution $u_{\beta}^{\varepsilon}$, it is still hard to apply it to the simulation regime. It is because of our concrete choice of operators formulated by a truncated Fourier series in subsection \ref{subsec:5.1}. It is then reasonable to propose a linearization for this QR scheme. Cf. \cite{Long2002}, we investigate convergence of the following numerical scheme for (\ref{IVP1})--(\ref{IVP2}):
\begin{align}
k\ge1: & \quad \frac{\partial^{2}}{\partial x^{2}}u_{\beta}^{\varepsilon,k}-\frac{\partial^{2}}{\partial y^{2}}u_{\beta}^{\varepsilon,k}=-\mathbf{P}_{\varepsilon}^{\beta}u_{\beta}^{\varepsilon,k-1},\nonumber \\
k=0: & \quad u_{\beta}^{\varepsilon,0}=0.\label{linear}
\end{align}
Here, for $k\ge 1$ we associate the PDE with the Dirichlet boundary condition and the initial conditions:
\[
\begin{cases}
u_{\beta}^{\varepsilon,k}\left(x,0\right)=u_{\beta}^{\varepsilon,k}\left(x,1\right)=0 & \text{for }x\in[0,1],\\
u_{\beta}^{\varepsilon,k}\left(0,y\right)=u_{0}^{\varepsilon}\left(y\right),\partial_{x}u_{\beta}^{\varepsilon,k}\left(0,y\right)=0 & \text{for }y\in[0,1].
\end{cases}
\]
The weak solvability of this linearization scheme can be proceeded as in section \ref{sec:3}. Therefore, we skip it in this part. However, we note that the right-hand side of the PDE for $k=1$ vanishes since $\mathbf{P}_{\varepsilon}^{\beta}u_{\beta}^{\varepsilon,0}=0$. Thereby, the weak solvability for $u_{\beta}^{\varepsilon,1}$ is attained for any $\varepsilon>0$. In other words, for any $\varepsilon$ one has
\begin{align}\label{u1}
\left\Vert u_{\beta}^{\varepsilon,1}\right\Vert _{C\left(\left[0,1\right];H^{1}\left(0,1\right)\right)}+\left\Vert \partial_{x}u_{\beta}^{\varepsilon,1}\right\Vert _{C\left(\left[0,1\right];L^{2}\left(0,1\right)\right)}\le C.
\end{align}

Our convergence analysis below shows that the refinement in $x$ should be dependent of the noise level $\varepsilon$. This means that the linearization is a local-in-$x$ approximation of the regularized solution $u^{\varepsilon}_{\beta}$. Nevertheless, since the convergence of $u^{\varepsilon}_{\beta}$ to the true solution $u$ is global in $x$ (cf. section \ref{sec:4}), the local approximation under consideration does not affect the whole  convergence of the QR scheme. It is because of the fact that for every $\varepsilon>0$ we can divide the domain of $x$ into many finite sub-domains and repeat the linearization procedure in every sub-domain. Hence, it suffices to consider the linearization scheme (\ref{linear}) in a sub-domain $[0,\bar{x}]\subset[0,1]$ and below, we single out the choice of $\bar{x}:=\bar{x}_{\beta}>0$ based upon $\gamma(\beta)$ to ensure the strong convergence of our linearization in $\mathbb{H}_{\bar{x}}$, where
\[
\mathbb{H}_{\bar{x}}:=\left\{u\in C\left(\left[0,\bar{x}\right];H^{1}\left(0,1\right)\right):u_{x}\in C\left(\left[0,\bar{x}\right];L^{2}\left(0,1\right)\right)\right\} .
\].

\begin{theorem}\label{thm:err3}
	The approximate solution  $u_{\beta}^{\varepsilon,k}$ defined in (\ref{linear}) is strongly convergent in $\mathbb{H}_{\bar{x}}$. Furthermore, for each $\varepsilon>0$ we can find a sufficiently small $\eta_{\beta}\in (0,1)$ such that
	\[
	\left\Vert u_{\beta}^{\varepsilon,k}-u_{\beta}^{\varepsilon}\right\Vert _{\mathbb{H}_{\bar{x}}}\le\frac{\eta_{\beta}^{k}}{1-\eta_{\beta}}\left\Vert u_{\beta}^{\varepsilon,1}\right\Vert _{\mathbb{H}_{\bar{x}}}.
	\]
\end{theorem}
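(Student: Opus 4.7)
The plan is to show that the iteration (\ref{linear}) is a contraction on $\mathbb{H}_{\bar{x}}$ once $\bar{x}=\bar{x}_\beta$ is taken sufficiently small with respect to $\log(\gamma(\beta))$, and then to combine the Banach fixed-point telescoping argument with the uniqueness result of Theorem \ref{thm:3.4} to identify the limit with $u_{\beta}^{\varepsilon}$. Define $v^k := u_{\beta}^{\varepsilon,k+1} - u_{\beta}^{\varepsilon,k}$ for $k\ge 0$ and subtract two consecutive equations in (\ref{linear}); for $k\ge 1$ the function $v^k$ obeys the linear wave equation
\[
\partial_{x^2}^{2} v^k - \partial_{y^2}^{2} v^k = -\mathbf{P}_{\varepsilon}^{\beta} v^{k-1} \quad\text{in }(0,\bar{x})\times(0,1),
\]
with homogeneous Dirichlet data at $y=0,1$ and, crucially, \emph{zero} Cauchy data $v^k(0,\cdot)=0$, $\partial_x v^k(0,\cdot)=0$, since all iterates share the same initial conditions. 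Note that $v^0 = u_{\beta}^{\varepsilon,1}$, whose $\mathbb{H}_{\bar{x}}$-norm is controlled uniformly in $\varepsilon$ by (\ref{u1}).

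Next, I would run exactly the same energy estimate used in Theorem \ref{thm:3.3}, namely test the equation for $v^k$ against $\partial_x v^k$, integrate by parts in $y$, and apply Young's inequality together with (\ref{PP}) to obtain
\[
\frac{d}{dx}\bigl[\|\partial_x v^k\|^2 + \|\partial_y v^k\|^2\bigr] \le C_1^2 \log^2(\gamma(\beta))\,\|v^{k-1}\|_{H^1(0,1)}^2 + \|\partial_x v^k\|^2.
\]
Integrating over $[0,x_1]\subset[0,\bar{x}]$ and exploiting the zero Cauchy data of $v^k$, Gronwall's inequality yields
\[
\|\partial_x v^k(x_1,\cdot)\|^2 + \|\partial_y v^k(x_1,\cdot)\|^2 \le C_1^2 \log^2(\gamma(\beta))\, e^{\bar{x}_\beta}\int_0^{x_1}\|v^{k-1}(s,\cdot)\|_{H^1(0,1)}^2\,ds.
\]
Taking the supremum over $x_1 \in [0,\bar{x}_\beta]$ and invoking the Poincaré equivalence $\|\cdot\|_{H^1(0,1)}\sim\|\partial_y\cdot\|$ on $H_0^1(0,1)$, I arrive at the contraction bound
\[
\|v^k\|_{\mathbb{H}_{\bar{x}}} \le \eta_\beta\, \|v^{k-1}\|_{\mathbb{H}_{\bar{x}}}, \qquad \eta_\beta^2 := C\,C_1^2 \log^2(\gamma(\beta))\,\bar{x}_\beta\, e^{\bar{x}_\beta},
\]
for an absolute constant $C>0$. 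At this point I select $\bar{x}_\beta$ small enough (for instance $\bar{x}_\beta\lesssim \log^{-2}(\gamma(\beta))$) that $\eta_\beta\in(0,1)$; this is precisely the local-in-$x$ restriction alluded to in the statement.

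Iterating the contraction inequality gives $\|v^k\|_{\mathbb{H}_{\bar{x}}} \le \eta_\beta^k \|u_{\beta}^{\varepsilon,1}\|_{\mathbb{H}_{\bar{x}}}$, hence for $k>m$
\[
\|u_{\beta}^{\varepsilon,k} - u_{\beta}^{\varepsilon,m}\|_{\mathbb{H}_{\bar{x}}} \le \sum_{j=m}^{k-1}\|v^j\|_{\mathbb{H}_{\bar{x}}} \le \frac{\eta_\beta^{m}}{1-\eta_\beta}\,\|u_{\beta}^{\varepsilon,1}\|_{\mathbb{H}_{\bar{x}}},
\]
which shows $\{u_{\beta}^{\varepsilon,k}\}$ is Cauchy in $\mathbb{H}_{\bar{x}}$ and admits a limit $u^\ast$. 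Passing to the limit in the weak formulation of (\ref{linear}) shows that $u^\ast$ is a weak solution of (\ref{IVP1})--(\ref{IVP2}) on $[0,\bar{x}_\beta]\times[0,1]$, and the uniqueness provided by Theorem \ref{thm:3.4} forces $u^\ast = u_{\beta}^{\varepsilon}$ there; letting $k\to\infty$ in the telescoping inequality gives the claimed bound. The main obstacle is the $\log^2(\gamma(\beta))$ factor inherited from (\ref{PP}), which blows up as $\beta\to 0$ and is what compels the shrinking choice of $\bar{x}_\beta$; reconciling this local contraction with the global-in-$x$ convergence of Theorem \ref{thm:err2} by restarting the linearization on successive sub-intervals, using the $\mathbb{H}_{\bar{x}}$-regularity of $u_{\beta}^{\varepsilon}(\bar{x}_\beta,\cdot)$ as fresh Cauchy data, is the subtlest point of the argument.
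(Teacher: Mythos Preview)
Your argument is correct and follows essentially the same route as the paper: derive the difference equation for $v^{k}=u_{\beta}^{\varepsilon,k+1}-u_{\beta}^{\varepsilon,k}$, run the energy estimate by testing against $\partial_x v^{k}$, apply Gronwall, choose $\bar{x}_\beta$ small enough to force a contraction, and finish with the geometric-series telescoping and limit identification. The only cosmetic difference is in how the Young inequality is weighted: the paper splits $2C_1\log(\gamma(\beta))\|v^{k-1}\|_{H^1}\|\partial_x v^{k}\|$ symmetrically with weight $C_1\log(\gamma(\beta))$ on each factor, which after Gronwall produces $\eta_\beta^2 = 2C_1\log(\gamma(\beta))\,\bar{x}\,\gamma^{C_1\bar{x}}(\beta)$ and the concrete choice $\bar{x}=\tfrac12\gamma^{-2C_1}(\beta)$, whereas you put weight $1$ on $\|\partial_x v^{k}\|^2$ and obtain $\eta_\beta^2 \sim C_1^2\log^2(\gamma(\beta))\,\bar{x}\,e^{\bar{x}}$ with $\bar{x}\lesssim\log^{-2}(\gamma(\beta))$; both choices yield $\eta_\beta<1$ and the same qualitative conclusion.
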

\begin{proof}
	Similar to our proofs above, we herein rely on energy estimates for the difference $v^{k+1}=u_{\beta}^{\varepsilon,k+1}-u_{\beta}^{\varepsilon,k}$. Different from the difference equation (\ref{w}), this time we do not have the presence of the perturbing operator $\mathbf{Q}_{\varepsilon}^{\beta}$. The difference equation for $v^{k+1}$ reads as
	\[
	\frac{\partial^{2}}{\partial x^{2}}v^{k+1}-\frac{\partial^{2}}{\partial y^{2}}v^{k+1}=-\mathbf{P}_{\varepsilon}^{\beta}v^{k},
	\]
	along with the zero Dirichlet boundary condition and zero initial data. Multiplying the difference equation by $v_x^{k+1}$ and integrating the resulting equation from 0 to $1$, we have
	\begin{align}
	& \frac{d}{dx}\left(\left\Vert v_{x}^{k+1}\left(x,\cdot\right)\right\Vert ^{2}+\left\Vert v_{y}^{k+1}\left(x,\cdot\right)\right\Vert ^{2}\right) \nonumber\\
	& =-2\left\langle \mathbf{P}_{\varepsilon}^{\beta}v^{k},v^{k+1}\right\rangle \le2C_{1}\log\left(\gamma\left(\beta\right)\right)\left(\left\Vert v^{k}\right\Vert ^{2}+\left\Vert v_{y}^{k}\right\Vert ^{2}\right)^{1/2}\left\Vert v_{x}^{k+1}\right\Vert . \label{hehe}
	\end{align}
	Integrating (\ref{hehe}) from $0$ to $x$, we estimate that
	\begin{align*}
	& \left\Vert v_{x}^{k+1}\left(x,\cdot\right)\right\Vert ^{2}+\left\Vert v_{y}^{k+1}\left(x,\cdot\right)\right\Vert ^{2}\\
	& \le C_{1}\log\left(\gamma\left(\beta\right)\right)\int_{0}^{x}\left(\left\Vert v^{k}\left(s,\cdot\right)\right\Vert ^{2}+\left\Vert v_{y}^{k}\left(s,\cdot\right)\right\Vert ^{2}+\left\Vert v_{x}^{k+1}\left(s,\cdot\right)\right\Vert ^{2}\right)ds\\
	& \le C_{1}\log\left(\gamma\left(\beta\right)\right)\bar{x}\left\Vert v^{k}\right\Vert _{C\left(\left[0,\bar{x}\right];H^{1}\left(0,1\right)\right)}^{2}+C_{1}\log\left(\gamma\left(\beta\right)\right)\int_{0}^{x}\left\Vert v_{x}^{k+1}\left(s,\cdot\right)\right\Vert ^{2}ds.
	\end{align*}
	Therefore, using the Gronwall inequality we obtain
	\begin{align*}
	& \sup_{x\in\left[0,\bar{x}\right]}\left(\left\Vert v_{x}^{k+1}\left(x,\cdot\right)\right\Vert ^{2}+\left\Vert v_{y}^{k+1}\left(x,\cdot\right)\right\Vert ^{2}\right)\\
	& \le C_{1}\log\left(\gamma\left(\beta\right)\right)\bar{x}\left(\left\Vert v^{k}\right\Vert _{C\left(\left[0,\bar{x}\right];H^{1}\left(0,1\right)\right)}^{2}+\left\Vert v_{x}^{k}\right\Vert _{C\left(\left[0,\bar{x}\right];L^{2}\left(0,1\right)\right)}^{2}\right)\gamma^{C_{1}\bar{x}}\left(\beta\right).
	\end{align*}
	Choosing now $\bar{x}$ small enough such that
	\begin{align}\label{eta}
		\eta_{\beta}^2:=2C_{1}\log\left(\gamma\left(\beta\right)\right)\bar{x}\gamma^{C_{1}\bar{x}}\left(\beta\right)<1,
	\end{align}
	we then find that
	\[
	\left\Vert v^{k+1}\right\Vert _{C\left(\left[0,\bar{x}\right];H^{1}\left(0,1\right)\right)}^{2}+\left\Vert v_{x}^{k+1}\right\Vert _{C\left(\left[0,\bar{x}\right];L^{2}\left(0,1\right)\right)}^{2}\le\eta_{\beta}^{2}\left(\left\Vert v^{k}\right\Vert _{C\left(\left[0,\bar{x}\right];H^{1}\left(0,1\right)\right)}^{2}+\left\Vert v_{x}^{k}\right\Vert _{C\left(\left[0,\bar{x}\right];L^{2}\left(0,1\right)\right)}^{2}\right).
	\]
	Henceforth, for $r\ge 1$ it holds that
	\begin{align}
	& \left\Vert u_{\beta}^{\varepsilon,k+r}-u_{\beta}^{\varepsilon,k}\right\Vert _{C\left(\left[0,\bar{x}\right];H^{1}\left(0,1\right)\right)}+\left\Vert \partial_{x}u_{\beta}^{\varepsilon,k+r}-\partial_{x}u_{\beta}^{\varepsilon,k}\right\Vert _{C\left(\left[0,\bar{x}\right];L^{2}\left(0,1\right)\right)} \nonumber\\
	& \le\sum_{j=1}^{r}\left(\left\Vert u_{\beta}^{\varepsilon,k+j}-u_{\beta}^{\varepsilon,k+j-1}\right\Vert _{C\left(\left[0,\bar{x}\right];H^{1}\left(0,1\right)\right)}+\left\Vert \partial_{x}u_{\beta}^{\varepsilon,k+j}-\partial_{x}u_{\beta}^{\varepsilon,k+j-1}\right\Vert _{C\left(\left[0,\bar{x}\right];L^{2}\left(0,1\right)\right)}\right) \nonumber\\
	& \le\sum_{j=1}^{r}\eta_{\beta}^{k+j-1}\left(\left\Vert u_{\beta}^{\varepsilon,1}-u_{\beta}^{\varepsilon,0}\right\Vert _{C\left(\left[0,\bar{x}\right];H^{1}\left(0,1\right)\right)}+\left\Vert \partial_{x}u_{\beta}^{\varepsilon,1}-\partial_{x}u_{\beta}^{\varepsilon,0}\right\Vert _{C\left(\left[0,\bar{x}\right];L^{2}\left(0,1\right)\right)}\right) \nonumber\\
	& \le\frac{\eta_{\beta}^{k}\left(1-\eta_{\beta}^{r}\right)}{1-\eta_{\beta}}\left(\left\Vert u_{\beta}^{\varepsilon,1}\right\Vert _{C\left(\left[0,\bar{x}\right];H^{1}\left(0,1\right)\right)}+\left\Vert \partial_{x}u_{\beta}^{\varepsilon,1}\right\Vert _{C\left(\left[0,\bar{x}\right];L^{2}\left(0,1\right)\right)}\right).\label{cauchy}
	\end{align}
	This shows that $\left\{ u_{\beta}^{\varepsilon,k}\right\} _{k\in\mathbb{N}}$ is a Cauchy sequence in the space $\mathbb{H}_{\bar{x}}$. Thus, there exists uniquely $u_{\beta}^{\varepsilon}\in \mathbb{H}_{\bar{x}}$ such that $u_{\beta}^{\varepsilon,k}\to u_{\beta}^{\varepsilon}$ strongly in $\mathbb{H}_{\bar{x}}$ as $k\to \infty$. When $r\to\infty$ in (\ref{cauchy}), we have
	\[
	\left\Vert u_{\beta}^{\varepsilon,k}-u_{\beta}^{\varepsilon}\right\Vert _{\mathbb{H}_{\bar{x}}}\le\frac{\eta_{\beta}^{k}}{1-\eta_{\beta}}\left\Vert u_{\beta}^{\varepsilon,1}\right\Vert _{\mathbb{H}_{\bar{x}}}.
	\]
	Combining this convergence with the linearity of the stabilized operator, we arrive at
	\[
	\left\Vert \mathbf{P}_{\varepsilon}^{\beta}u_{\beta}^{\varepsilon,k}-\mathbf{P}_{\varepsilon}^{\beta}u_{\beta}^{\varepsilon}\right\Vert \le C_{1}\log\left(\gamma\left(\beta\right)\right)\left\Vert u_{\beta}^{\varepsilon,k}-u_{\beta}^{\varepsilon}\right\Vert _{H^{1}\left(0,1\right)}\le\frac{\eta_{\beta}^{k}C_{1}\log\left(\gamma\left(\beta\right)\right)}{1-\eta_{\beta}}\left\Vert u_{\beta}^{\varepsilon,1}\right\Vert _{\mathbb{H}_{\bar{x}}}.
	\]
	Thanks to the choice (\ref{eta}) and to the fact that (\ref{u1}) holds, $\mathbf{P}_{\varepsilon}^{\beta}u_{\beta}^{\varepsilon,k}\to\mathbf{P}_{\varepsilon}^{\beta}u_{\beta}^{\varepsilon}$ strongly in $L^2(0,1)$ as $k\to\infty$. Hence, the limit function $u_{\beta}^{\varepsilon}\in \mathbb{H}_{\bar{x}}$ obtained above is actually the solution of the regularized system (\ref{IVP1})--(\ref{IVP2}). We complete the proof of the theorem.
\end{proof}

It is worth mentioning that we can choose $\bar{x}=\frac{1}{2}\gamma^{-2C_1}(\beta)$ to fulfill the choice (\ref{eta}) because
\[
2C_{1}\log\left(\gamma\left(\beta\right)\right)\gamma^{C_{1}\bar{x}}\bar{x}=\log\left(\gamma^{C_{1}}\left(\beta\right)\right)\gamma^{-C_{1}}\left(\beta\right)\gamma^{C_{1}\left(\bar{x}-1\right)}<1.
\]
Now, we state the strong convergence of $u_{\beta}^{\varepsilon,k}$ towards the true solution $u$ by a combination of Theorems \ref{thm:err2}--\ref{thm:err3}. 

\begin{theorem}\label{thm:err5}
	Under the assumptions of Theorem \ref{thm:err2}, if the choice of $\gamma(\beta)$ in (\ref{gamma}) holds, then
	\[
	\left\Vert \partial_{x}u_{\beta}^{\varepsilon,k}\left(x,\cdot\right)-\partial_{x}u\left(x,\cdot\right)\right\Vert ^{2}+\left\Vert u_{\beta}^{\varepsilon,k}\left(x,\cdot\right)-u\left(x,\cdot\right)\right\Vert _{H^{1}\left(0,1\right)}^{2}\le C\left[\gamma^{\frac{7C_{1}}{3}-2}\left(\beta\right)+\eta_{\beta}^{2k}\right].
	\]
	Also, if the choice (\ref{gamma1}) holds, then
	\[
	\left\Vert u_{\beta}^{\varepsilon,k}\left(x,\cdot\right)-u\left(x,\cdot\right)\right\Vert ^{2}\le C\left[\gamma^{\frac{7C_{1}}{3}-2}\left(\beta\right)+\eta_{\beta}^{2k}\right].
	\]
\end{theorem}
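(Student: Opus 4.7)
The plan is to derive Theorem \ref{thm:err5} by inserting the intermediate regularized solution $u_\beta^\varepsilon$ and applying the triangle inequality:
\[
u_{\beta}^{\varepsilon,k}(x,\cdot) - u(x,\cdot) = \bigl(u_{\beta}^{\varepsilon,k}(x,\cdot) - u_{\beta}^{\varepsilon}(x,\cdot)\bigr) + \bigl(u_{\beta}^{\varepsilon}(x,\cdot) - u(x,\cdot)\bigr),
\]
followed by the elementary inequality $\|A+B\|^2 \le 2\|A\|^2 + 2\|B\|^2$, applied in the norms $\|\partial_x(\cdot)\|^2 + \|\cdot\|_{H^1(0,1)}^2$ (for the (\ref{gamma})-case) or in $\|\cdot\|^2$ (for the (\ref{gamma1})-case). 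The crucial observation is that both summands have already been controlled in this paper: the linearization error by Theorem \ref{thm:err3} and the regularization error by Theorem \ref{thm:err2}.

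For the linearization piece, Theorem \ref{thm:err3} together with the uniform bound (\ref{u1}) gives
\[
\|u_{\beta}^{\varepsilon,k} - u_{\beta}^{\varepsilon}\|_{\mathbb{H}_{\bar{x}}}^{2} \le \frac{\eta_\beta^{2k}}{(1-\eta_\beta)^2}\,\|u_\beta^{\varepsilon,1}\|_{\mathbb{H}_{\bar x}}^{2} \le C\,\eta_\beta^{2k},
\]
where the last step exploits the canonical choice $\bar{x} = \tfrac{1}{2}\gamma^{-2C_1}(\beta)$ singled out after Theorem \ref{thm:err3}; this choice forces $\eta_\beta \to 0$ as $\beta \to 0$, so that $(1-\eta_\beta)^{-1}$ is eventually bounded by an absolute constant. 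For the regularization piece, Theorem \ref{thm:err2} provides, under the choice (\ref{gamma}),
\[
\|\partial_x u_\beta^\varepsilon(x,\cdot) - \partial_x u(x,\cdot)\|^2 + \|u_\beta^\varepsilon(x,\cdot) - u(x,\cdot)\|_{H^1(0,1)}^2 \le C\,\gamma^{7C_1 x/3 - 2}(\beta) \le C\,\gamma^{7C_1/3 - 2}(\beta),
\]
since $7C_1/3 - 2 < 0$ under the standing assumption $C_1 < 6/7$; under the stronger choice (\ref{gamma1}) the analogous $L^2$-bound from (\ref{err3}) is used instead. Summing the two contributions yields precisely the two announced inequalities.

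The main obstacle is that Theorem \ref{thm:err3} is a local-in-$x$ result on $[0,\bar{x}]$ with $\bar{x}$ depending on $\beta$, whereas Theorem \ref{thm:err5} is advertised for arbitrary $x\in[0,1]$. Following the remark after Theorem \ref{thm:err3}, one resolves this by partitioning $[0,1]$ into $N=\lceil\bar{x}^{-1}\rceil \sim \gamma^{2C_1}(\beta)$ sub-intervals and re-initialising the linearization on each, using the endpoint value of the regularized solution from the previous sub-interval as new Dirichlet datum. I would need to verify carefully that (i) the contraction constant $\eta_\beta$ from (\ref{eta}) is unchanged across re-initialisations, which follows because $\bar{x}$ depends only on $\gamma(\beta)$; and (ii) the analogue of (\ref{u1}) on each sub-interval yields a uniform first-iterate bound, so that any multiplicative compounding across the $N$ sub-intervals is absorbed into the constant $C$ without disturbing the dominating rate $\gamma^{7C_1/3 - 2}(\beta) + \eta_\beta^{2k}$. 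Once (i) and (ii) are in hand, the pointwise-in-$x$ bound of Theorem \ref{thm:err5} follows by the triangle argument above applied on whichever sub-interval contains the given $x$.
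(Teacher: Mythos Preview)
Your proposal is correct and follows exactly the approach the paper intends: the paper provides no detailed proof of Theorem \ref{thm:err5}, stating only that it is obtained ``by a combination of Theorems \ref{thm:err2}--\ref{thm:err3}'', which is precisely your triangle-inequality decomposition into the linearization error and the regularization error. Your explicit flagging of the local-in-$x$ versus global-in-$x$ discrepancy, and the need to verify that the contraction constant and first-iterate bound behave uniformly under re-initialisation on sub-intervals, goes beyond what the paper spells out, but is fully consistent with the informal remark the paper makes just before Theorem \ref{thm:err3} about dividing $[0,1]$ into sub-domains and repeating the linearization procedure.
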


\subsection{Generalizations}\label{subsec:5.3}
\subsubsection{Non-homogeneous cases}
Even though the inverse problem (\ref{original1}) has a simple form, in this part we show that solving more general equations is simply the same. Cf. \cite{Qian2008}, the Cauchy problem for a non-homogeneous elliptic equation can be reduced to the simplified one (\ref{original1}). Indeed, consider the following boundary value determination inverse problem:
\begin{align}\label{original}
\begin{cases}
w_{xx}+w_{yy}=f\left(x,y\right) & \text{in }\left(0,1\right)\times\left(0,1\right),\\
w\left(x,0\right)=g_0(x), w\left(x,1\right)= g_1(x) & \text{for }x\in [0,1],\\
w\left(0,y\right)=w_{0}\left(y\right),w_{x}\left(0,y\right)=w_{1}\left(y\right) & \text{for }y\in [0,1].
\end{cases}
\end{align}
In this regard, we can look for $w = u + \tilde{u}$, where $u$ satisfies the inverse problem (\ref{original1}) and $\tilde{u}$ obeys the following boundary value problem:
\begin{align}\label{utilde}
\begin{cases}
\tilde{u}_{xx}+\tilde{u}_{yy}=f\left(x,y\right) & \text{in }\left(0,1\right)\times\left(0,1\right),\\
\tilde{u}\left(x,0\right)=g_{0}\left(x\right),\tilde{u}\left(x,1\right)=g_{1}\left(x\right) & \text{for }x\in\left[0,1\right],\\
\tilde{u}_{x}\left(0,y\right)=w_{1}\left(y\right) & \text{for }y\in\left[0,1\right],\\
\tilde{u}\left(1,y\right)=\left(1-y\right)g_{0}\left(1\right)+yg_{1}\left(1\right) & \text{for }y\in\left[0,1\right].
\end{cases}
\end{align}
It is clear that (\ref{utilde}) is a two-dimensional elliptic equation with mixed non-homogeneous boundary conditions. Therefore, it is obviously well-posed with respect to all inputs. Henceforth, instead of solving the inverse problem (\ref{original}), we can simply investigate the simpler case (\ref{original1}), which facilitates a lot of computational issues in the inversion method. Multidimensional non-homogeneous problems can also be transformed into the Cauchy problem for the Laplace equation using the same transformation as above; cf. e.g. \cite{Elden2009}.

\subsubsection{Convergence with large noise ($\varepsilon \ge1$)}
This is now the first time we attempt to show convergence of the QR scheme with large noise. Here, we are interested in answering the question whether or not our QR scheme is convergent when $\varepsilon \gg 1$. To do so, we only need to modify our establishment for the perturbing and stabilized operators along with a new auxiliary function. Instead of using $\gamma(\beta)\ge 1$ as we have done with the case $\varepsilon\to 0$, we now consider an auxiliary function $\tau:(0,1)\to\mathbb{R}$ such that for $\beta\in(0,1)$ there holds
\[
\tau(\beta)\le 1,\quad \lim_{\beta\to 0}\tau(\beta)=0.
\]
Thereby, we plug $\gamma(\beta)=\frac{1}{\tau(\beta)}$ into Definitions \ref{def1}--\ref{def2} and proceed the mathematical analysis as in the case $\varepsilon\to 0$. In short, we state the principle convergence analysis in the following theorem, while the convergence regarding the choice of $\tau$ in terms of large noise $\varepsilon$ follows immediately.
\begin{theorem}\label{thm:err4}
	Suppose the measurement assumption (\ref{measure}) holds with $\varepsilon \gg 1$. If we choose $\tau(\beta)\le 1$ such that $2C_1\log\left(\frac{1}{\tau\left(\beta\right)}\right)/3>1$, then the following error estimate holds:
	\begin{align}\label{err4}
	& \left\Vert \partial_{x}u_{\beta}^{\varepsilon}\left(x,\cdot\right)-\partial_{x}u\left(x,\cdot\right)\right\Vert ^{2}+\left\Vert u_{\beta}^{\varepsilon}\left(x,\cdot\right)-u\left(x,\cdot\right)\right\Vert_{H^1(0,1)}^{2}\\
	& \le\frac{4}{3}C_{1}^{2}\left[4\varepsilon^{2}+\frac{27}{8}C_{0}^{2}C_{1}^{-3}\tau^{2}\left(\beta\right)\log^{-3}\left(\frac{1}{\tau(\beta)}\right)x\left\Vert u\right\Vert _{C\left([0,1];\mathbb{W}\right)}^{2}\right]\log^{2}\left(\frac{1}{\tau(\beta)}\right)\tau^{-7C_{1}x/3}(\beta).\nonumber
	\end{align}
\end{theorem}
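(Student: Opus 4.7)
The plan is to carry over the entire argument of Theorem \ref{thm:err1} under the formal substitution $\gamma(\beta) = 1/\tau(\beta)$, since the new Definitions \ref{def1}--\ref{def2} (with this substitution) give $\|\mathbf{Q}_\varepsilon^\beta u\| \leq C_0\tau(\beta)\|u\|_{\mathbb{W}}$ and $\|\mathbf{P}_\varepsilon^\beta u\| \leq C_1 \log(1/\tau(\beta))\|u\|_{H^1(0,1)}$, and these are the only ingredients from the regularization framework that feed into the convergence proof. Because $\tau(\beta)\to 0$ as $\beta\to 0$, the quantity $\log(1/\tau(\beta))$ plays exactly the role of $\log(\gamma(\beta))$ and is unboundedly large, so the Carleman-type weight $e^{-\rho_\beta x}$ with $\rho_\beta>1$ can still be deployed.

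First I would set $w=(u_\beta^\varepsilon-u)e^{-\rho_\beta x}$ with $\rho_\beta>0$ to be pinned down, and derive the same damped wave equation (\ref{w}) with initial/boundary data (\ref{ww}). Multiplying by $w_x$, integrating in $y$, and invoking the two conditional estimates together with Cauchy--Schwarz yields the analogue of (\ref{w12}) in which $\gamma^{-2}(\beta)$ is replaced by $\tau^2(\beta)$ and every occurrence of $\log(\gamma(\beta))$ is replaced by $\log(1/\tau(\beta))$. Then I would choose
\[
\rho_\beta=\frac{2C_1\log(1/\tau(\beta))}{3}>1,
\]
which is admissible by the hypothesis and kills the $\|w_x\|^2$ term via the relation $2C_1\log(1/\tau(\beta))-3\rho_\beta\leq 0$, leaving a differential inequality to which Gronwall applies.

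Integrating from $0$ to $x_1$ and using (\ref{measure}) to control the initial data exactly as in (\ref{intialest})---note this step does not use $\varepsilon<1$ anywhere, only the raw bound $\|u_0^\varepsilon-u_0\|_{H^1(0,1)}\leq\varepsilon$, so it remains valid for $\varepsilon\gg 1$---gives the $\tau$-version of (\ref{ye}):
\[
\rho_\beta^{-2}(\|w_x(x_1,\cdot)\|^2+\|w_y(x_1,\cdot)\|^2)+\|w(x_1,\cdot)\|^2\leq\left[4\varepsilon^2+C_0^2\tau^2(\beta)\rho_\beta^{-3}x_1\|u\|_{C([0,1];\mathbb{W})}^2\right]\tau^{-C_1 x_1}(\beta).
\]
Back-substitution $u_\beta^\varepsilon-u=we^{\rho_\beta x}$ and the triangle estimate $\rho_\beta^{-2}e^{-2\rho_\beta x_1}\|\partial_x u_\beta^\varepsilon-\partial_x u\|^2\leq 2(\|w\|^2+\rho_\beta^{-2}\|w_x\|^2)$ then produce the target (\ref{err4}) after collecting the factor $e^{2\rho_\beta x}=\tau^{-4C_1 x/3}(\beta)$ with $\tau^{-C_1 x}(\beta)$ into $\tau^{-7C_1 x/3}(\beta)$, and the constants $\rho_\beta^{-3}$ into $(27/8)C_1^{-3}\log^{-3}(1/\tau(\beta))$.

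The main obstacle is essentially bookkeeping rather than anything conceptually new: one has to check that the Cauchy--Schwarz/Young splittings used to bound $I_1$ and $I_2$ in Theorem \ref{thm:err1} still absorb the relevant terms into $\|w_x\|^2$ without picking up any implicit assumption $\varepsilon<1$, and that $\rho_\beta>1$ can always be enforced by the hypothesis on $\tau$. Since $\varepsilon$ enters the final estimate only through the explicit factor $4\varepsilon^2$ in (\ref{err4}), the proof is insensitive to the size of $\varepsilon$, and the genuine content of the large-noise regime is deferred to the subsequent discussion of how to calibrate $\tau(\beta)$ against $\varepsilon\gg 1$ so that $4\varepsilon^2\log^2(1/\tau(\beta))\tau^{-7C_1 x/3}(\beta)$ still tends to zero, which is analogous to choices (\ref{gamma})--(\ref{gamma1}) but is not asserted as part of this theorem.
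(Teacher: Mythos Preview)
Your proposal is correct and mirrors exactly what the paper does: the paper gives no separate proof of Theorem \ref{thm:err4} but simply instructs the reader to plug $\gamma(\beta)=1/\tau(\beta)$ into Definitions \ref{def1}--\ref{def2} and rerun the argument of Theorem \ref{thm:err1}, which is precisely the route you outline with all the bookkeeping (the choice $\rho_\beta=\tfrac{2}{3}C_1\log(1/\tau(\beta))$, the Gronwall step, and the back-substitution collecting $\tau^{-C_1 x}\cdot\tau^{-4C_1 x/3}=\tau^{-7C_1 x/3}$) handled correctly. Your closing observation that the calibration of $\tau(\beta)$ against large $\varepsilon$ is not part of the theorem statement also matches the paper's presentation.
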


Observe in (\ref{err4}) that when $\varepsilon \gg 1$ we use the small quantity $\tau^2\left(\beta\right)$ to control $\varepsilon^2$ as well as the term $\log^{2}\left(\frac{1}{\tau(\beta)}\right)$. Thereupon, the choices of $\tau(\beta)$ can be taken as in (\ref{gamma}) and (\ref{gamma1}), respectively. Finally, for intermediate noise like $\varepsilon\to 1$, we can still get the convergence by taking a shifted auxiliary function $\tau := \tau - 1$, for instance.


\section{Numerical results}\label{sec:6}
In this part, we take into account a finite difference solution of our proposed QR scheme in the linearized version we have studied above. Since the discretization for the domain of $x$ has to be dependent of $\varepsilon$ in the sense of (\ref{eta}), its mesh is understood as ``fine mesh" in this part. Meanwhile, we use ``coarse mesh" for the domain of $y$. For each $\varepsilon > 0$, we consider a uniform grid of mesh-points $x_{m}= m\Delta x$, where $\mathbb{N}\ni m\le M$ and $\Delta x$ is the equivalent mesh-width
in $x$. In the same manner, we take $y_{n}=n\Delta y$ with $\mathbb{N}\ni n\le N$. As mentioned in (\ref{Pnum}), the stabilized operator is chosen as
\[
\mathbf{P}_{\varepsilon}^{\beta}u=-2\sum_{\mu_{j}\le\frac{1}{16}\log^2\left(\gamma\left(\beta\right)\right)}\mu_{j}\left\langle u,\phi_{j}\right\rangle \phi_{j},
\]
which gives $C_1 = 1/2$.
Since we are interested in the choice (\ref{gamma1}), we follow the  arguments in Remark \ref{rem:4.3} to choose $\gamma(\beta) = \beta^{-1}$ and $\beta = \varepsilon$. By this way, we condition that $\Delta x = \frac{1}{2}\gamma^{-2C_1}(\beta) = \varepsilon/2$ and for simplicity, we take $\Delta x = \Delta y$.

In this linearization regime, we only need $k=3$ because, cf. Theorem \ref{thm:err5}, the convergence is eventually dominated by the H\"older rate in $\gamma$. For example, take $\varepsilon = 10^{-2}$ we find in (\ref{eta}) that
\[
\eta_{\beta}^2 = \frac{1}{2}\log\left(\frac{1}{\varepsilon}\right)\frac{\varepsilon}{\varepsilon^{\varepsilon/4}} \approx 0.0233.
\]
Therefore, we compute that $\eta_{\beta}^{2k}\approx 1.2\times 10^{-5}$, while the H\"older rate gives $\gamma^{\frac{7C_1}{3}-2}(\beta)=\varepsilon^{5/6}\approx 0.0215$.

As to the Dirichlet eigen-elements we have mentioned in Remark \ref{rem:2.3}, it is trivial to get that
\[
\phi_{j}\left(y\right) = \sqrt{2}\sin\left(j\pi y\right), \quad \mu_{j} = j^2\pi^2\quad \text{for }j\in \mathbb{N}.
\]
Prior to the derivation of the discrete version of (\ref{linear}), we note that we are not concerned with highly oscillatory integrals usually met in infinite series (cf. e.g. \cite{Khoa2017a}) because of the very low Fourier domain after truncation. Indeed, for $\varepsilon = 10^{-2}$ one has
\[
\mu_{j}\le\frac{1}{16}\log^2\left(\frac{1}{\varepsilon}\right)\approx 1.33, 
\]
which means $j\le 0.37$, i.e. $j=0$ in this case. It also holds up to $\varepsilon = 10^{-5}$. Note that when $j=0$, the stabilized operator vanishes. Below, we only consider $\varepsilon=10^{-1},10^{-2}$ (relatively meaning $10\%, 1\%$) as usually met in reality. In this regard, the number of mesh-points in $x$ and $y$ is not large and less expensive for real-world measurements.

For ease of presentation, we neglect the presence of $\beta$. Taking the starting point $u_{m,n}^{\varepsilon,0}=0$, we
seek a discrete solution $u_{m,n}^{\varepsilon,k}\approx u^{\varepsilon,k}\left(x_m,y_n\right)$ satisfying the following equation:
\begin{align}\label{discrete1}
\frac{u_{m+1,n}^{\varepsilon,k}-2u_{m,n}^{\varepsilon,k}+u_{m-1,n}^{\varepsilon,k}}{\left(\Delta x\right)^{2}}=\frac{u_{m,n+1}^{\varepsilon,k}-2u_{m,n}^{\varepsilon,k}+u_{m,n-1}^{\varepsilon,k}}{\left(\Delta y\right)^{2}}+2\Delta y\sum_{j\le\frac{1}{4\pi}\log\left(\frac{1}{\varepsilon}\right)}\sum_{l=0}^{N}\mu_{j}u_{m,l}^{\varepsilon,k-1}\phi_{j}\left(y_{l}\right)\phi_{j}\left(y_{n}\right).
\end{align}
Letting $r= \frac{\Delta x}{\Delta y}$, (\ref{discrete1}) is equivalent to
\begin{align*}
u_{m+1,n}^{\varepsilon,k}&=2u_{m,n}^{\varepsilon,k}-u_{m-1,n}^{\varepsilon,k}+r^{2}\left(u_{m,n+1}^{\varepsilon,k}-2u_{m,n}^{\varepsilon,k}+u_{m,n-1}^{\varepsilon,k}\right)\\ &
+2\left(\Delta x\right)^2\Delta y\sum_{j\le\frac{1}{4\pi}\log\left(\frac{1}{\varepsilon}\right)}\sum_{l=0}^{N}\mu_{j}u_{m,l}^{\varepsilon,k-1}\phi_{j}\left(y_{l}\right)\phi_{j}\left(y_{n}\right)\;\text{for }1\le m\le M-1,1\le n\le N -1.
\end{align*}
Henceforth, the fully discrete version of (\ref{linear})  can be written in matrix form as 
\begin{align}\label{discrete2}
\mathbf{U}_{m+1}^{\varepsilon,k}=\mathbf{K}\mathbf{U}_{m}^{\varepsilon,k}-\mathbf{U}_{m-1}^{\varepsilon,k}+\mathbf{F}\left(\mathbf{U}_{m}^{\varepsilon,k-1}\right)\quad\text{for }1\le m\le M-1,
\end{align}
where we have denoted by $\mathbf{U}_{m}^{\varepsilon,k}=\left(u_{m,1}^{\varepsilon,k},u_{m,2}^{\varepsilon,k},\ldots,u_{m,N-1}^{\varepsilon,k}\right)^{\text{T}}\in\mathbb{R}^{N-1}$ and $\mathbf{K}\in\mathbb{M}^{\left(N-1\right)\times\left(N-1\right)}$, $ \mathbf{F}\in \mathbb{R}^{N-1}$ given by
\begin{align*}
\mathbf{K}=\begin{bmatrix}2-2r^{2} & r^{2} & 0 & \cdots & 0\\
r^{2} & 2-2r^{2} & r^{2} & \cdots & 0\\
0 & \ddots & \ddots & \ddots & \vdots\\
\vdots &  & r^{2} & 2-2r^{2} & r^{2}\\
0 & \cdots & 0 & r^{2} & 2-2r^{2}
\end{bmatrix},\;\mathbf{F}\left(\mathbf{U}_{m}^{\varepsilon,k-1}\right)=\begin{bmatrix}\mathbf{F}\left(\mathbf{U}_{m}^{\varepsilon,k-1}\right)\left(y_{1}\right)\\
\mathbf{F}\left(\mathbf{U}_{m}^{\varepsilon,k-1}\right)\left(y_{2}\right)\\
\vdots\\
\vdots\\
\mathbf{F}\left(\mathbf{U}_{m}^{\varepsilon,k-1}\right)\left(y_{N-1}\right)
\end{bmatrix}.
\end{align*}
Here, the function $\mathbf{F}\left(\mathbf{U}_{m}^{\varepsilon,k-1}\right)$ can be computed in the previous step of linearization procedure, whose elements read as
\begin{align*}
\mathbf{F}\left(\mathbf{U}_{m}^{\varepsilon,k-1}\right)\left(y_{n}\right) & =2\left(\Delta x\right)^2\Delta y\sum_{j\le\frac{1}{4\pi}\log\left(\frac{1}{\varepsilon}\right)}\sum_{l=0}^{N}\mu_{j}u_{m,l}^{\varepsilon,k-1}\phi_{j}\left(y_{l}\right)\phi_{j}\left(y_{n}\right)\\
& =2\left(\Delta x\right)^2\Delta y\sum_{j\le\frac{1}{4\pi}\log\left(\frac{1}{\varepsilon}\right)}\mu_{j}\begin{bmatrix}\phi_{j}\left(y_{1}\right)\\
\phi_{j}\left(y_{2}\right)\\
\vdots\\
\vdots\\
\phi_{j}\left(y_{N-1}\right)
\end{bmatrix}^{\text{T}}\begin{bmatrix}u_{m,1}^{\varepsilon,k-1}\\
u_{m,2}^{\varepsilon,k-1}\\
\vdots\\
\vdots\\
u_{m,N-1}^{\varepsilon,k-1}
\end{bmatrix}\phi_{j}\left(y_{n}\right).
\end{align*}
Note that due to the Dirichlet condition in $y$, we have $u_{m,0}^{\varepsilon,k}=u_{m,N}^{\varepsilon,k}=0$ for $0\le m \le M$. Moreover, using the initial conditions we endow (\ref{discrete2}) with
\begin{align}\label{discrete3}
\mathbf{U}_{0}^{\varepsilon,k}=\mathbf{U}_{1}^{\varepsilon,k}=\left(u_{0}^{\varepsilon}\left(y_{1}\right),u_{0}^{\varepsilon}\left(y_{2}\right),\ldots,u_{0}^{\varepsilon}\left(y_{N-1}\right)\right)^{\text{T}},
\end{align}
which is attained by our measured data.

It is well-known that the system (\ref{discrete2})--(\ref{discrete3}) satisfies the von Neumann stability condition when $r\le 1$ due to the explicit finite difference regime we choose. This choice is based on the fact that  $\Delta x$ is already very small by its dependence on $\varepsilon$ and that the explicit scheme is easier to use in implementation. As to the choice of numerical integration involved in $\mathbf{P}_{\varepsilon}^{\beta}$ we simply rely on the Riemann sum because we intend to take $\Delta y $ slightly small for better images' resolution. The number of $y_n$ is not a matter here since we can apply some quadrature methods to get good accuracy for, e.g., $N\le 10$. This is already postulated in our previous work \cite{Khoa2017a} and is not our main interest in this work.

Our numerical illustrations consist of two (2) tests, where we suppose to know the true solutions with different shapes. We remark that these true solutions satisfy the non-homogeneous elliptic equation, which turns out that we solve the general inverse problem (\ref{original}) instead of just (\ref{original1}). The algorithm that transforms (\ref{original}) into (\ref{original1}) is already given in subsection \ref{subsec:5.3}. With the analytical solutions, we can easily compute all inputs involved in (\ref{original}). On the other hand, we take into account the following type of additive measured data of (\ref{original1}):
\begin{align}
&\label{noise1} u_{0}^{\varepsilon}\left(y\right)=u\left(0,y\right)+\text{rand}\left(y\right)\varepsilon=w_{0}\left(y\right)-\tilde{u}\left(0,y\right)+\text{rand}\left(y\right)\varepsilon,\\
& \partial_{y}u_{0}^{\varepsilon}\left(y\right)=\partial_{y}w_{0}\left(y\right)-\partial_{y}\tilde{u}\left(0,y\right)+\text{rand}\left(y\right)\varepsilon,\label{noise2}
\end{align}
where $w_0(y)$ is known from the analytical solution, $\tilde{u}\left(0,y\right)$ can be found by solving (\ref{utilde}), and $\text{rand}$ is a uniformly distributed random number such that
$\max_{y\in\left[0,1\right]}\text{rand}\left(y\right)\le 1$. By this way, we fulfill the assumption (\ref{measure}).

Lastly, we define the $\ell^2$ and relative errors as
\begin{align*}
& E_{\ell^{2}}=\sqrt{\frac{1}{(M+1)(N+1)}\sum_{m=0}^{M}\sum_{n=0}^{N}\left|w_{\beta}^{\varepsilon}\left(x_{m},y_{n}\right)-w_{\text{true}}\left(x_{m},y_{n}\right)\right|^{2}},\\
& E_{\text{rel}}=\frac{\sqrt{\sum_{m=0}^{M}\sum_{n=0}^{N}\left|w_{\beta}^{\varepsilon}\left(x_{m},y_{n}\right)-w_{\text{true}}\left(x_{m},y_{n}\right)\right|^{2}}}{\sqrt{\sum_{m=0}^{M}\sum_{n=0}^{N}\left|w_{\text{true}}\left(x_{m},y_{n}\right)\right|^{2}}}\times100\%.
\end{align*}

\begin{remark}
	In practice, we just need to use the measured data (\ref{noise1}) to estimate its gradient (\ref{noise2}) because measurements are usually expensive. At the discretization level, one has
	\begin{align*}
	\partial_{y}u_{0}^{\varepsilon}\left(y_{n}\right) & \approx\frac{u_{0}^{\varepsilon}\left(y_{n+1}\right)-u_{0}^{\varepsilon}\left(y_{n}\right)}{\Delta y}=\frac{u_{0}\left(y_{n+1}\right)+\text{rand}\left(y_{n+1}\right)\varepsilon-u_{0}\left(y_{n}\right)-\text{rand}\left(y_{n}\right)\varepsilon}{\Delta y}\\
	& \approx\partial_{y}u_{0}\left(y_{n}\right)+\varepsilon\left(\Delta y\right)^{-1}\left(\text{rand}\left(y_{n+1}\right)-\text{rand}\left(y_{n}\right)\right).
	\end{align*}
	If we take $\left(\Delta y\right)^{-1} = C_1 \log\left(\gamma(\beta)\right)/3=\log\left(\varepsilon^{-1}\right)/3$
	, we have
	\[
	\left|\varepsilon\left(\Delta y\right)^{-1}\left(\text{rand}\left(y_{n+1}\right)-\text{rand}\left(y_{n}\right)\right)\right|\le\varepsilon\rho_{\beta},
	\]
	where we have recalled $\rho_{\beta}$ in the proof of Theorem \ref{thm:err1}. Theoretically, the bound $\varepsilon\rho_{\beta}$ is acceptable because cf. (\ref{intialest}), the whole error bound for initial data in the proof remains unchanged. Note that by this $\varepsilon$ dependence, the number of measurement points in $y$ is small. Therefore, as we have mentioned above, one should apply, e.g., the Gauss--Legendre method to get a fine numerical integration for $\mathbf{P}_{\varepsilon}^{\beta}$. This reveals a facing challenge of inverse problems in real-world applications.
\end{remark}

\subsection{Test 1: sinusoidal humps}
In this test, we reconstruct the heat distribution satisfying the inverse problem (\ref{original}), where we suppose that the true analytical solution is
\[
w_{\text{true}}\left(x,y\right)=\sin\left(6x\right)\sin\left(6y\right).
\]
This test models sinusoidal humps, which is one of the classical examples in simulation. To validate the proposed scheme, we depict in Figure \ref{fig:1} the computed solutions with $\varepsilon = 10\%$ and $\varepsilon = 1\%$ and also, we compare them with the true solution illustrated therein. We can see that the computed solutions are very close to the true one. Besides, we briefly report that for $\varepsilon = 10\%$ the $\ell^2$ error is $0.09$ and the relative error is $15.2\%$. For $\varepsilon = 1\%$, the errors reduce to $0.06$ and $10.8\%$, respectively.
\begin{figure}
	\centering
	\subfloat[Computed ($\varepsilon=10^{-1}$)]{\includegraphics[scale=0.5]{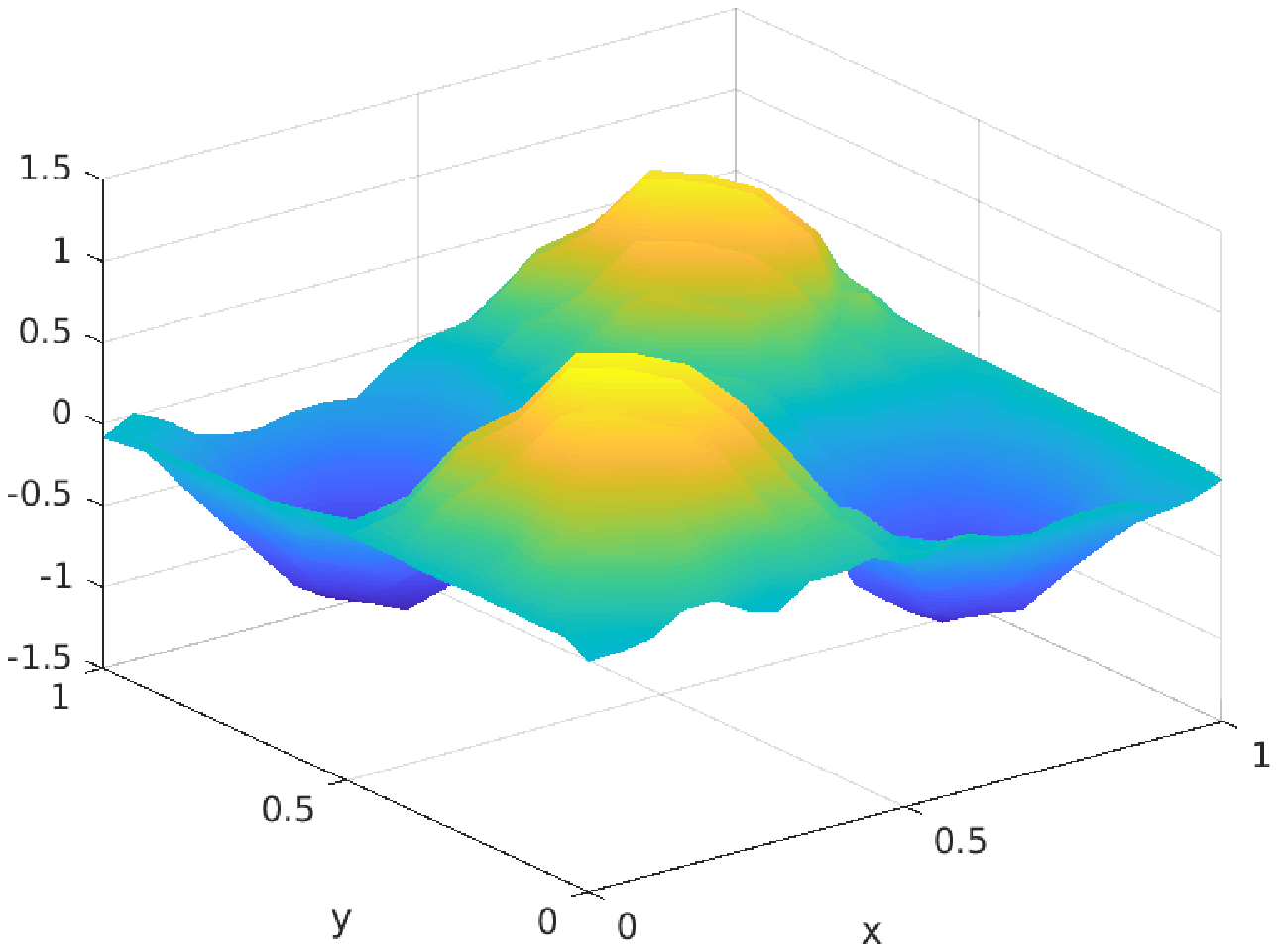}	
	}
	\subfloat[Computed ($\varepsilon=10^{-1}$)]{\includegraphics[scale=0.5]{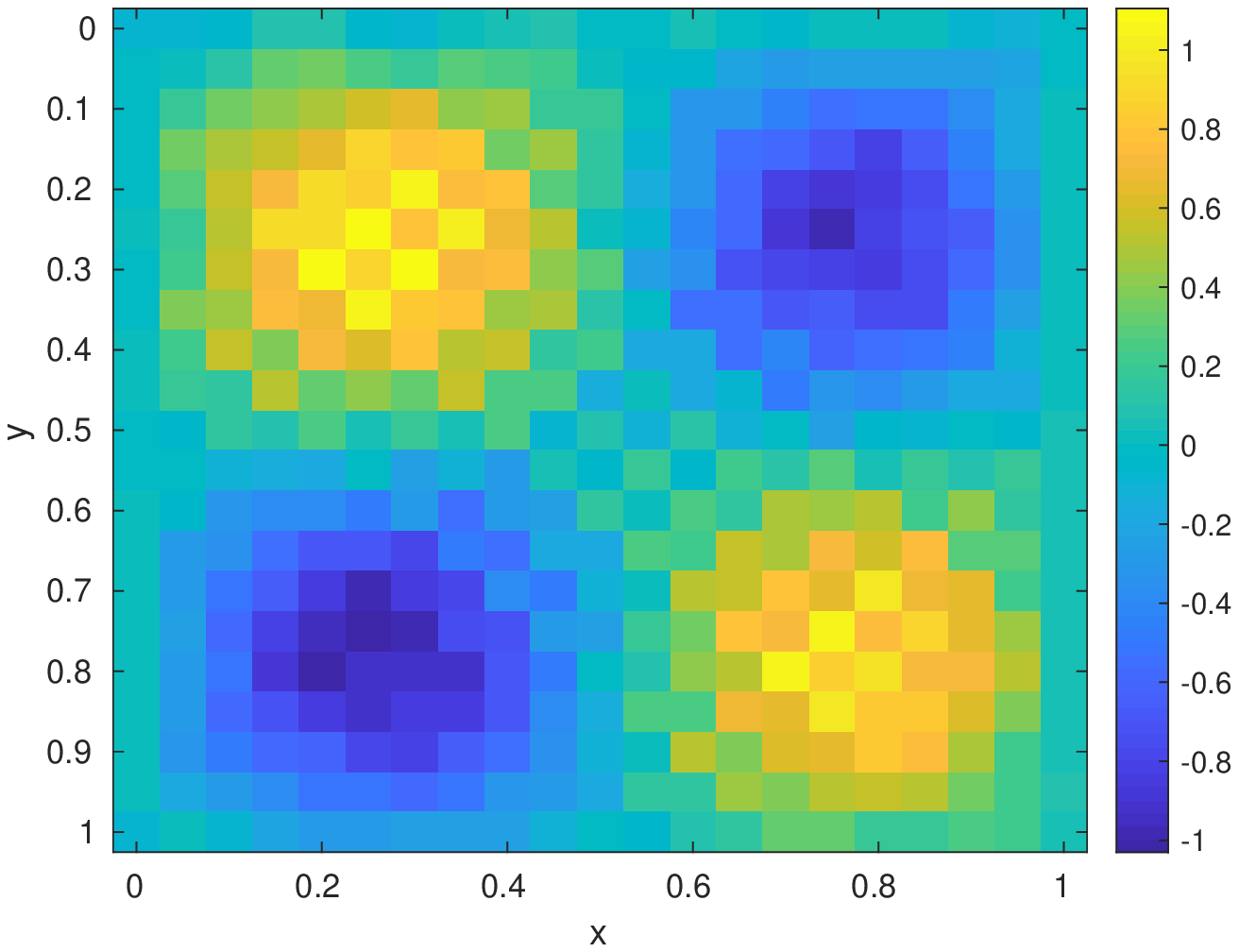}	
	}\\
	\subfloat[Computed ($\varepsilon=10^{-2}$)]{\includegraphics[scale=0.5]{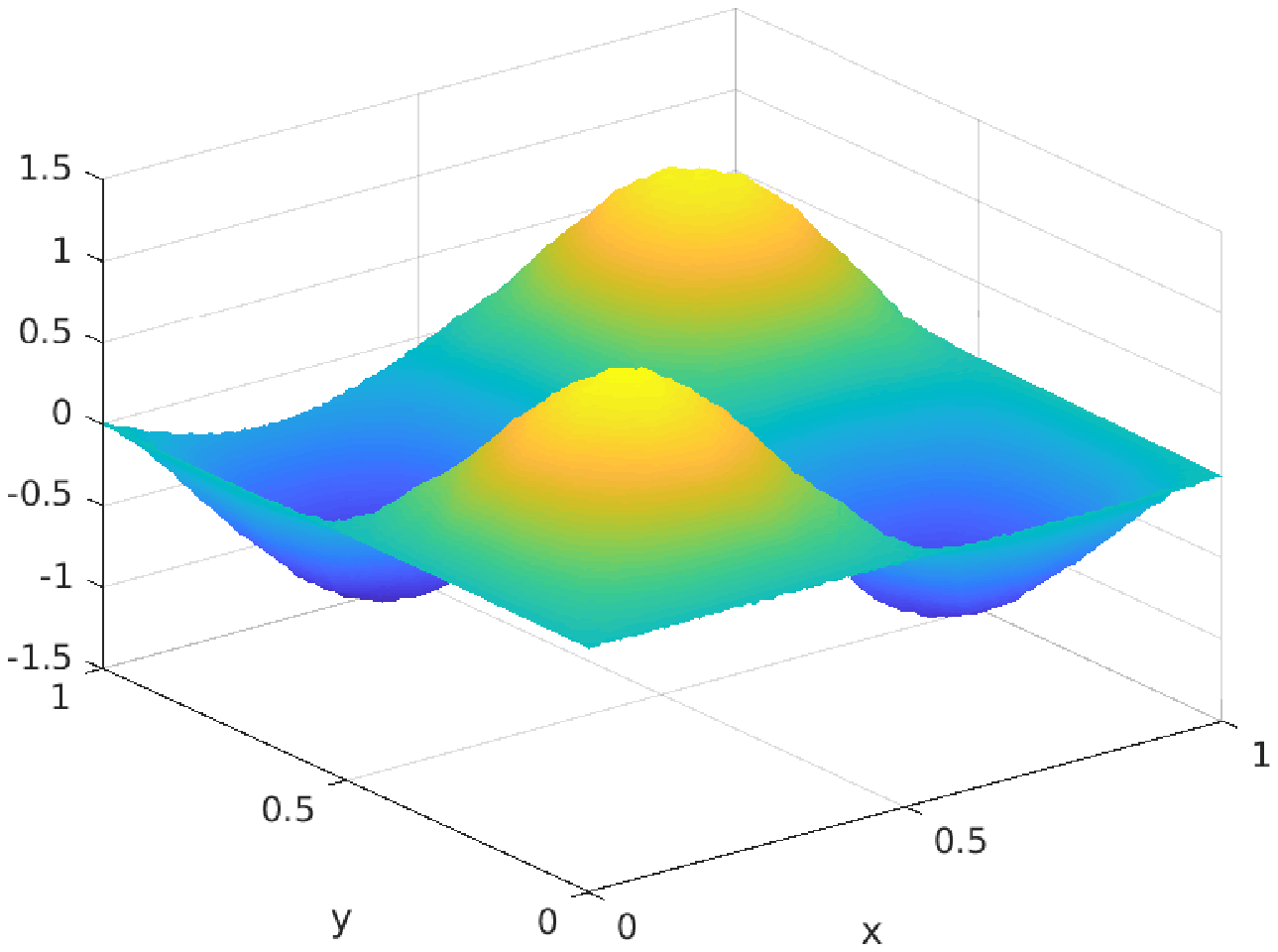}	
	}
	\subfloat[Computed ($\varepsilon=10^{-2}$)]{\includegraphics[scale=0.5]{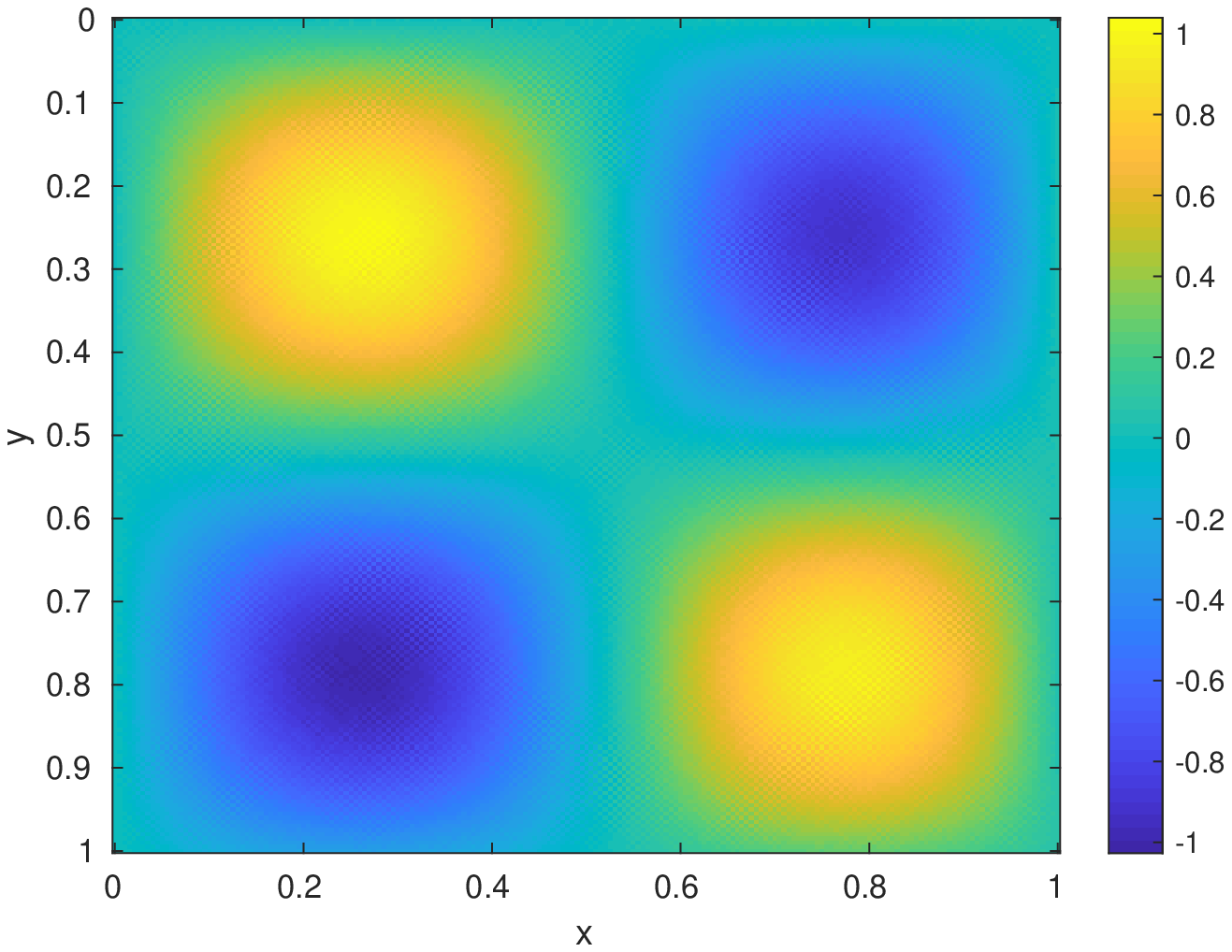}	
	}\\
	\subfloat[True]{\includegraphics[scale=0.5]{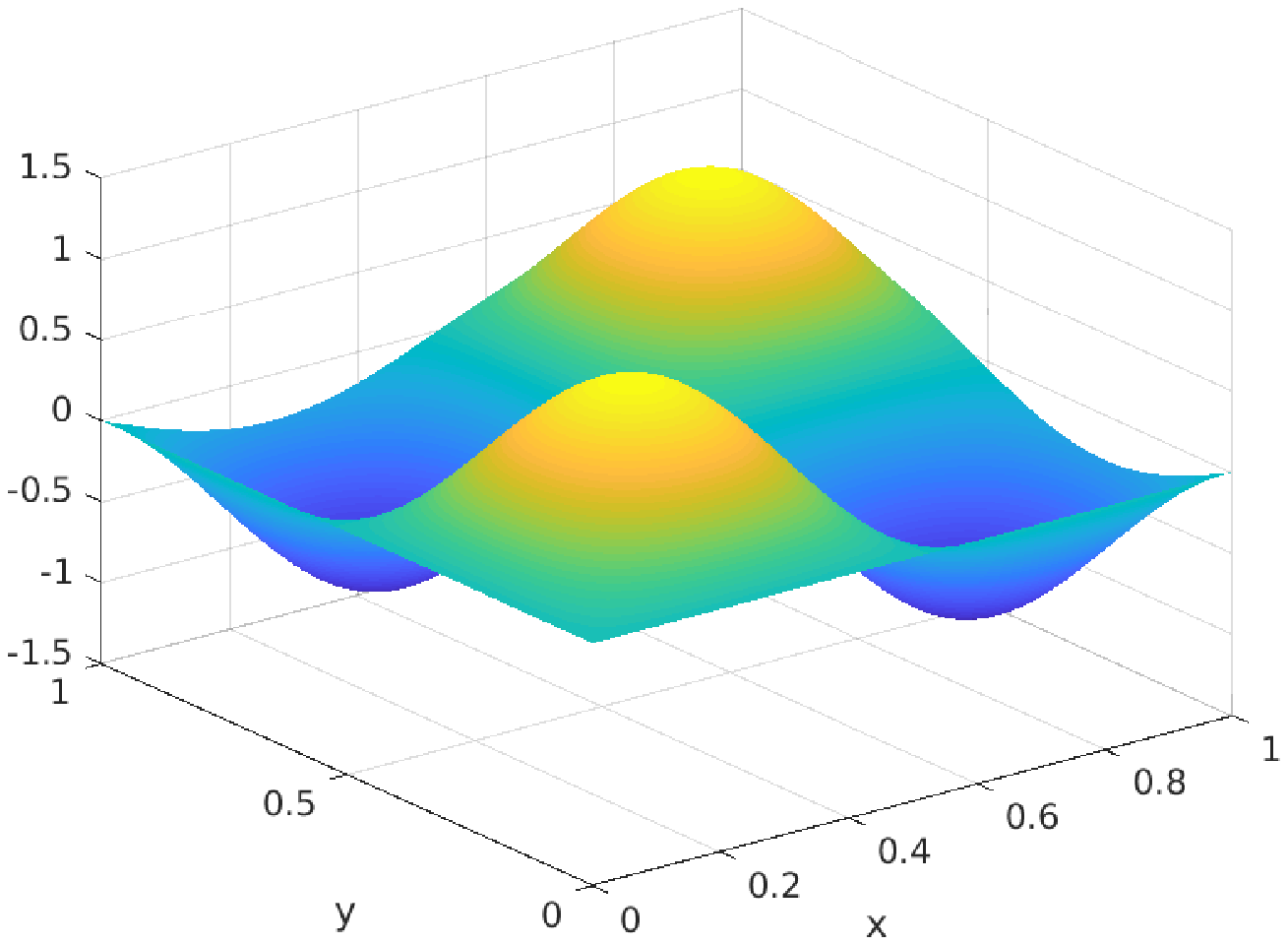}	
	}
	\subfloat[True]{\includegraphics[scale=0.5]{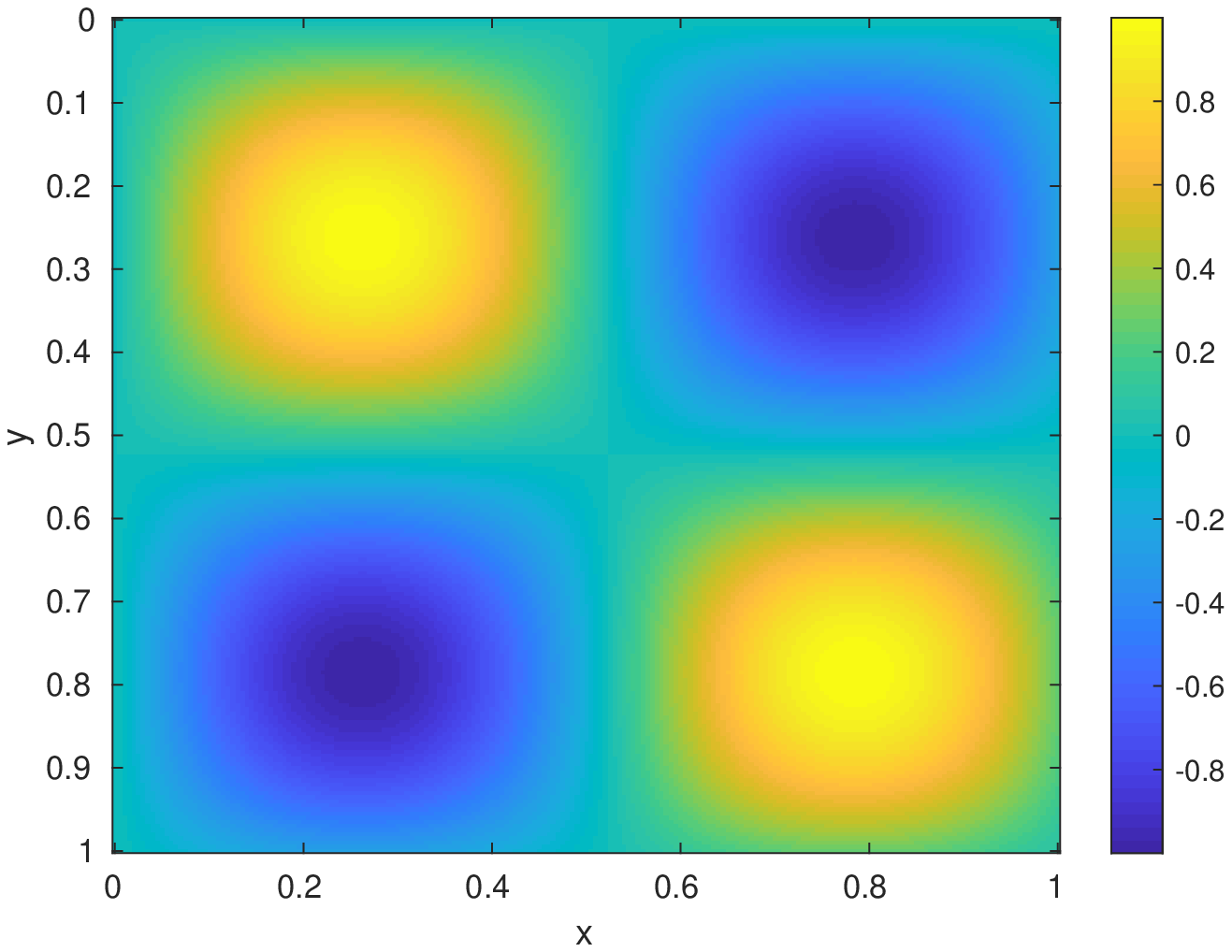}	
	}
	\caption{(a)--(b) Reconstructed solution in Test 1 with $\varepsilon=10^{-1},M=N=20$. (c)--(d) Reconstructed solution in Test 1 with $\varepsilon=10^{-2},M=N=200$. (e)--(f) Illustrations of the true solution in Test 1.\label{fig:1}}
\end{figure}
\subsection{Test 2: a box-shaped protrusion}
In the second test, we suppose that
\[
w_{\text{true}}\left(x,y\right)=\frac{1}{0.001+\left(x-0.5\right)^{4}+\left(y-0.5\right)^{4}},
\]
which resembles a scaled ``witch of Agnesi". Unlike Test 1, the true solution in this test attains a really huge maximal value. Therefore, the $\ell^2$ error can be relatively large. Figure \ref{fig:2} shows illustrations of the computed and true solutions. We observe that for a very coarse mesh in both $x$ and $y$ ($M=N=20$) and large noise (10\%) the 3D image of the computed solution is very accurate. We remark that the projection of the true solution onto the plane $\left\{z=0\right\}$ is box-shaped. Then in this particular comparison, the accuracy of the approximate solution is also enjoyed when $\varepsilon = 1\%$. Furthermore, as to the numerical errors we find that for $\varepsilon = 10\%$ the $\ell^2$ and relative errors are   $5.55$ and $1.9\%$, respectively. When $\varepsilon = 1\%$, they become smaller with $2.45$ and merely $0.81\%$.

\begin{figure}
	\centering
	\subfloat[Computed ($\varepsilon=10^{-1}$)]{\includegraphics[scale=0.5]{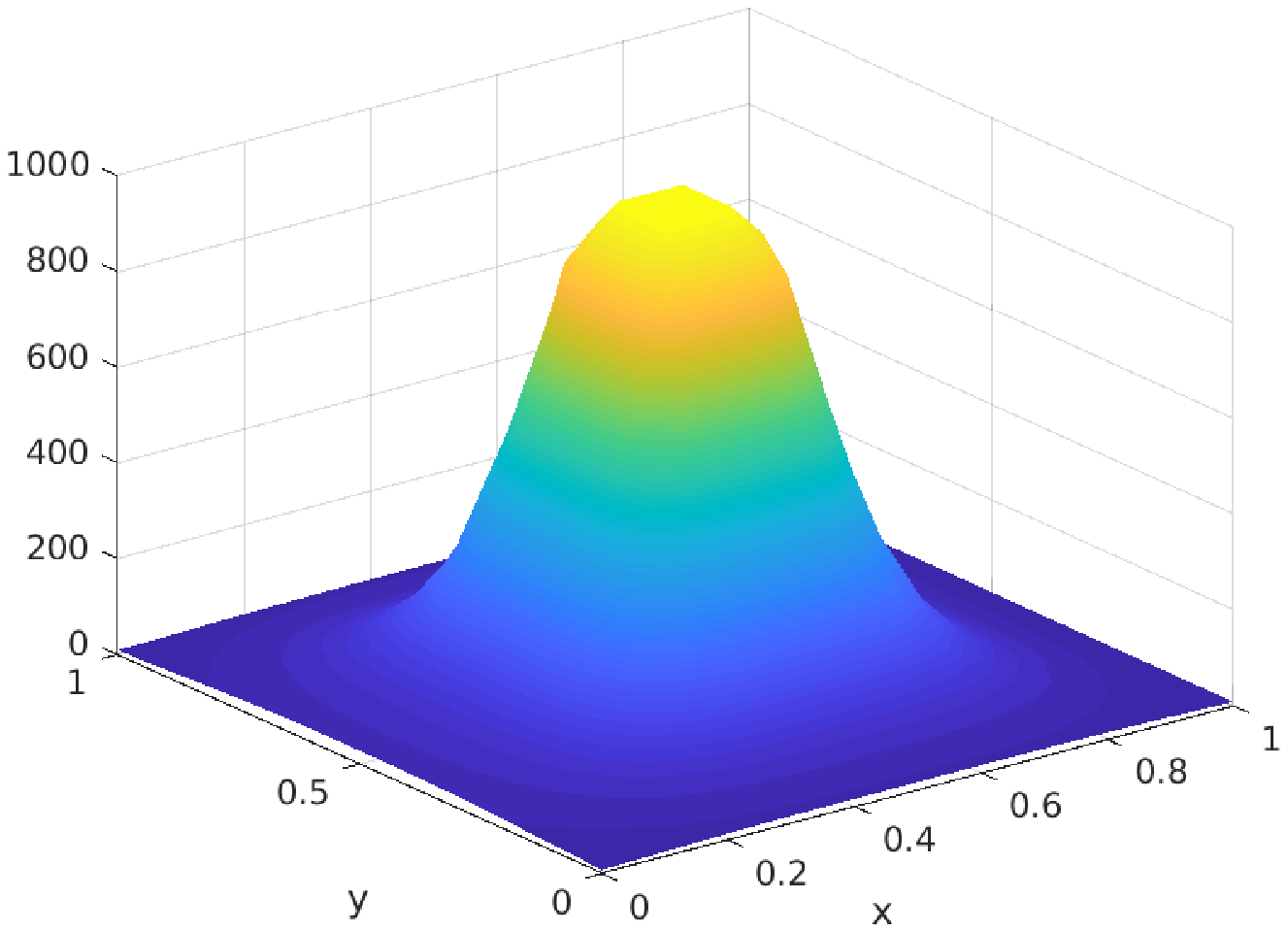}	
	}
	\subfloat[Computed ($\varepsilon=10^{-1}$)]{\includegraphics[scale=0.5]{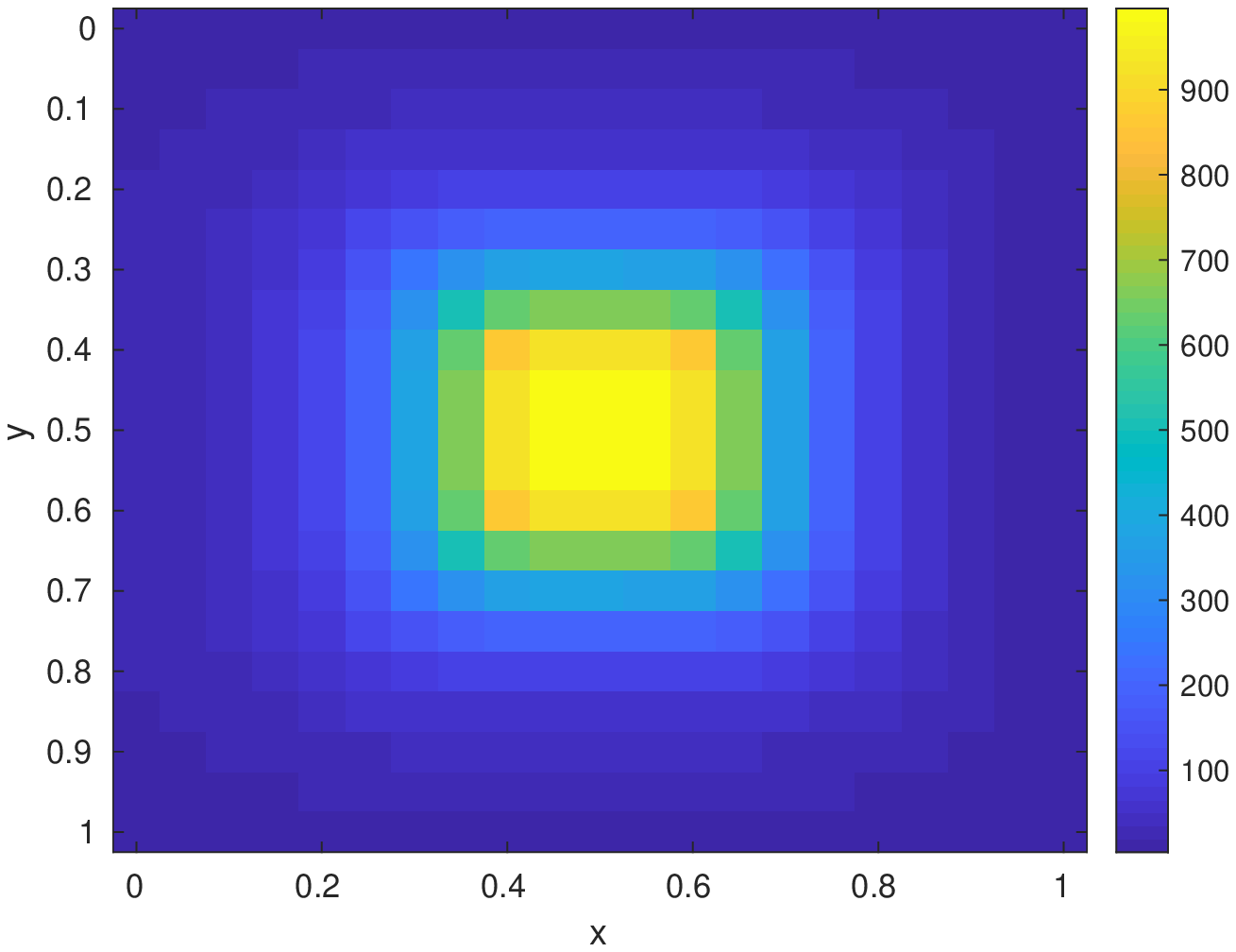}	
	}\\
	\subfloat[Computed ($\varepsilon=10^{-2}$)]{\includegraphics[scale=0.5]{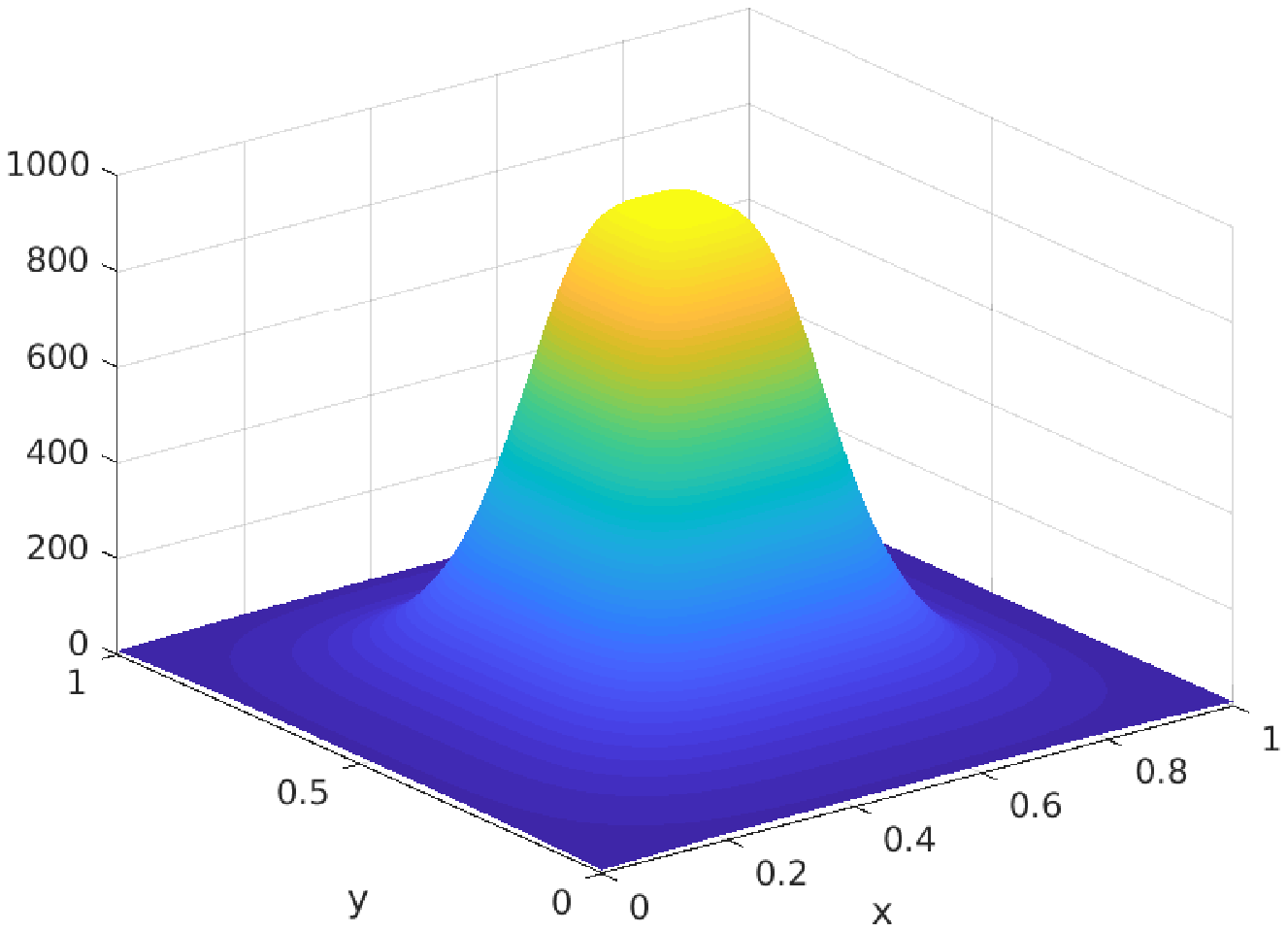}	
	}
	\subfloat[Computed ($\varepsilon=10^{-2}$)]{\includegraphics[scale=0.5]{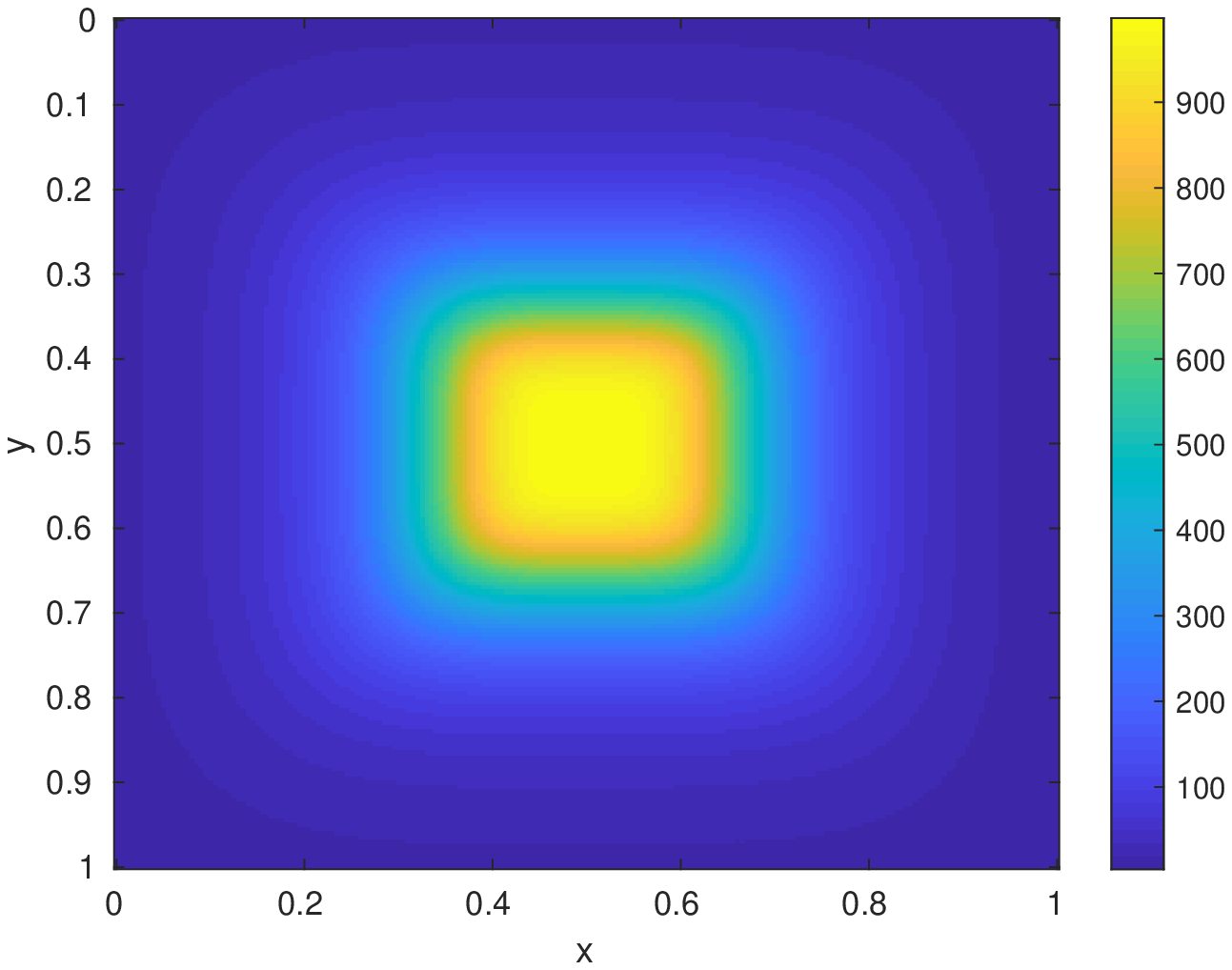}	
	}\\
	\subfloat[True]{\includegraphics[scale=0.5]{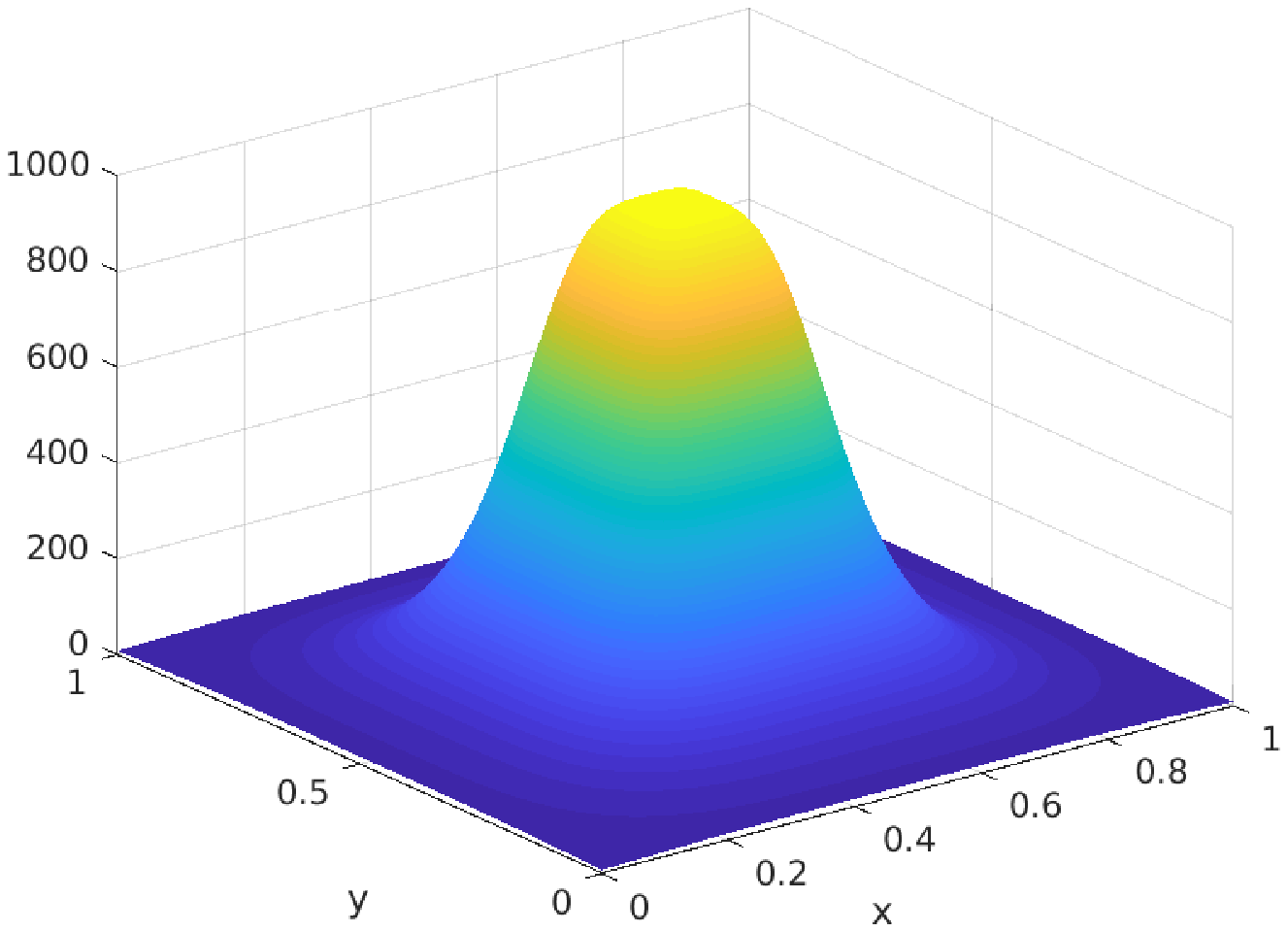}	
	}
	\subfloat[True]{\includegraphics[scale=0.5]{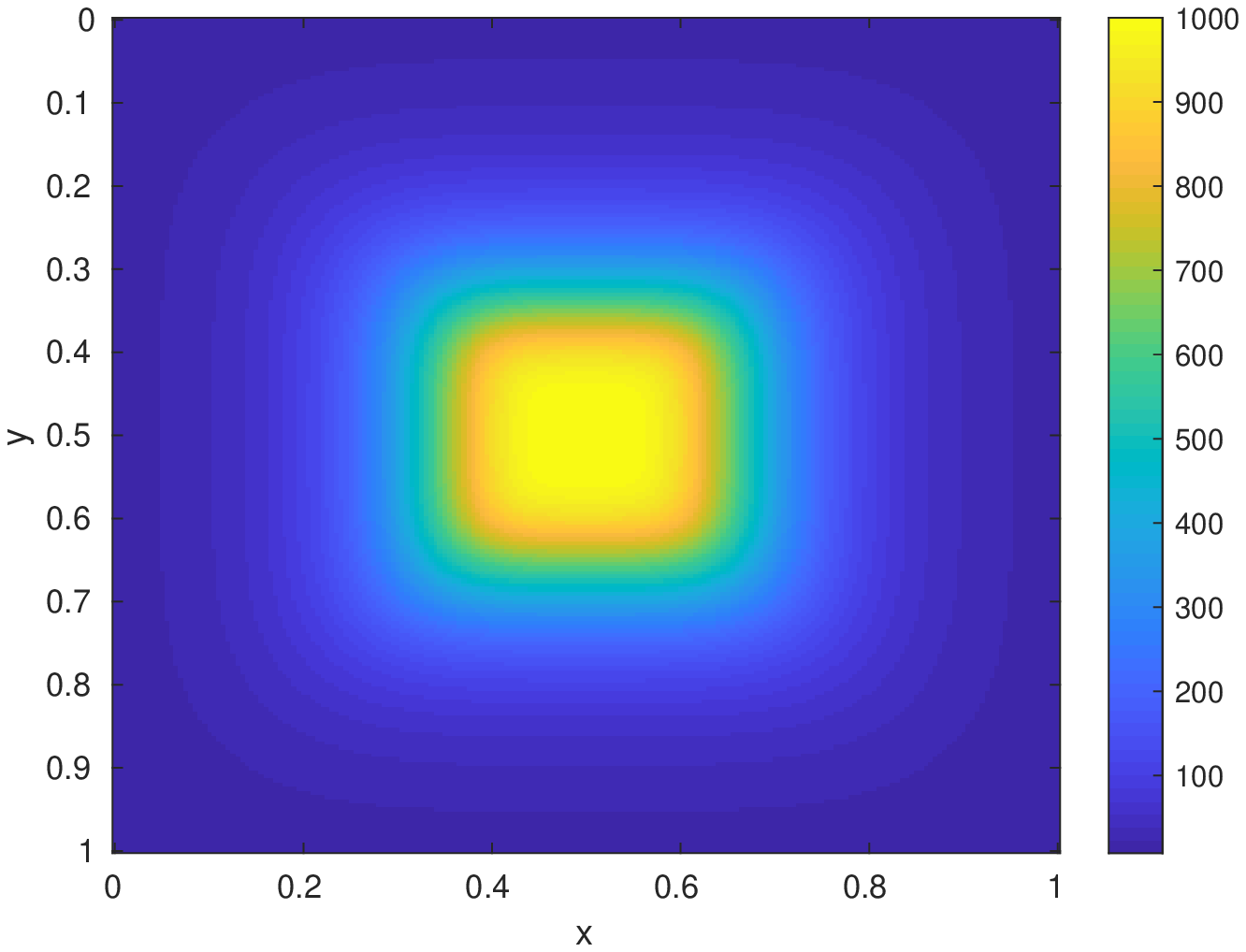}	
	}
	\caption{(a)--(b) Reconstructed solution in Test 2 with $\varepsilon=10^{-1},M=N=20$. (c)--(d) Reconstructed solution in Test 2 with $\varepsilon=10^{-2},M=N=200$. (e)--(f) Illustrations of the true solution in Test 2.\label{fig:2}}
\end{figure}

\section{Conclusions}\label{sec:7}
We have demonstrated that the modified quasi-reversibility method we have developed so far for regularization of time-reversed parabolic problems is well-adapted to solve the Cauchy problem for elliptic equations. In this regard, we have proposed two conditional estimates for our perturbing and stabilized operators, which are crucial for the strong convergence of the regularization scheme for this problem. The notion behind this method is to turn the inverse problem into the forward-like problem, which can be solved by several numerical methods. In this case, we solve the Cauchy problem for the elliptic equation by a linear wave equation. We have exploited the energy method to prove weak solvability of the regularized problem. Moreover, driven by a Carleman-type function, we have obtained the H\"older rate of convergence towards the ``ideal" true solution with distinctive cases of noise levels: small ($\varepsilon \ll 1$), large ($\varepsilon \gg 1$) and intermediate. This shows the flexibility of our method.

Some upcoming topics should be investigated in the near future.
\begin{enumerate}
	\item The present choice of the perturbing and stabilized operators is really dependent of information of the eigen-elements. However, solving the Sturm--Liouville eigenvalue problems posed in complex-geometry materials (cf. e.g. \cite{Grebenkov2013,Liu2013}) is rather challenging, compared to standard media like rectangle and circle we usually consider in theory. Therefore, our first forthcoming target is about a numerical solution of the regularized system in more complicated domains.
	
	\item To show that our regularization results have general applicability in applied sciences, we are inclined to adapt this new method to cope with many other types of inverse problems. For instance, it is promising to study this method for coefficient inverse problems \cite{Klibanov2019} and inverse fractional PDEs \cite{Trong2019,Siskova2019}.
	
	\item If we take into account the strong solution of the regularized problem, our QR scheme is convergent in $H^2$. As in \cite{Klibanov1991,Bourgeois2006}, it is then interesting to deduce an error estimation in $H^2$.
\end{enumerate}

\appendix

\section{Proofs of (\ref{rela}) and (\ref{rela1})}
\label{appendix-sec1}
To prove (\ref{rela}) and (\ref{rela1}), we recall the formulation of the Fourier coefficients of the true solution, which usually indicates the natural ill-posedness of the Cauchy problem for elliptic equations. Cf. \cite{Khoa2017}, we compute for the Laplace system (\ref{original1}) that
\begin{align*}
 \left\langle u\left(x,\cdot\right),\phi_{j}\right\rangle =\cosh\left(\sqrt{\mu_{j}}x\right)\left\langle u_{0},\phi_{j}\right\rangle ,\quad
 \left\langle u_{x}\left(x,\cdot\right),\phi_{j}\right\rangle =\sqrt{\mu_{j}}\sinh\left(\sqrt{\mu_{j}}x\right)\left\langle u_{0},\phi_{j}\right\rangle,
\end{align*}
which yield
\begin{align*}
& \left\langle u\left(x,\cdot\right),\phi_{j}\right\rangle +\frac{\left\langle u_{x}\left(x,\cdot\right),\phi_{j}\right\rangle }{\sqrt{\mu_{j}}}=e^{\sqrt{\mu_{j}}x}\left\langle u_{0},\phi_{j}\right\rangle ,\\
& \left\langle u\left(1,\cdot\right),\phi_{j}\right\rangle +\frac{\left\langle u_{x}\left(1,\cdot\right),\phi_{j}\right\rangle }{\sqrt{\mu_{j}}}=e^{\sqrt{\mu_{j}}}\left\langle u_{0},\phi_{j}\right\rangle .
\end{align*}
Henceforth, we obtain the relation (\ref{rela}). Proof of (\ref{rela1}) follows because
\[
\frac{1}{\sqrt{\mu_{j}}}\left\langle u\left(x,\cdot\right),\phi_{j}\right\rangle \left\langle u_{x}\left(x,\cdot\right),\phi_{j}\right\rangle =\cosh\left(\sqrt{\mu_{j}}x\right)\sinh\left(\sqrt{\mu_{j}}x\right)\left|\left\langle u_{0},\phi_{j}\right\rangle \right|^{2}\ge0.
\]

\section*{Acknowledgment}
V.A.K was partly supported by the Research Foundation-Flanders (FWO) in Belgium under the project named ``Approximations for forward and inverse reaction-diffusion problems related to cancer models''. V.A.K is very grateful for the guidance and support of Prof. Dr. Loc Hoang Nguyen (Charlotte, USA) during the fellowship at UNCC.



\bibliographystyle{elsarticle-num}

\bibliography{sample}

\end{document}